\documentclass[12pt]{article}

\usepackage[utf8]{inputenc}
\usepackage[T1]{fontenc}
\usepackage{amsmath, amssymb, amsthm}
\usepackage{authblk}

\usepackage{mathrsfs}

\usepackage{xurl}
\usepackage{hyperref}

\providecommand{\subjclass}[2][]{%
  \par\smallskip\noindent\textbf{MSC 2020. }#2\par\smallskip
}
\providecommand{\keywords}[1]{%
  \par\smallskip\noindent\textbf{Keywords. }#1\par\smallskip
}

\title{\bf Structural Chirality from Inverse Semigroups to Twisted Groupoid $C^*$-Algebras}

\author{\Large Takao Inou\'{e}}

\affil{\large Faculty of Informatics, Yamato University, \\ Osaka, Japan\footnote{Email: inoue.takao@yamato-u.ac.jp; \\ Personal Email: takaoapple@gmail.com \\ [I prefer my personal email address for correspondence.]}}

\date{February 26, 2026}

\theoremstyle{definition}
\newtheorem{definition}{Definition}[section]

\theoremstyle{plain}
\newtheorem{proposition}[definition]{Proposition}
\newtheorem{lemma}[definition]{Lemma}
\newtheorem{theorem}[definition]{Theorem}

\theoremstyle{remark}
\newtheorem{remark}[definition]{Remark}

\newcommand{\Iso}{\mathrm{Iso}}
\newcommand{\Mir}{\mathrm{Mir}}
\newcommand{\Atp}{\mathrm{Atp}}

\begin{document}

\maketitle

\begin{abstract}
We develop a structural theory of chirality for inverse semigroups and show how it
propagates canonically to \'{e}tale groupoids and twisted groupoid $C^*$-algebras.
Starting from inverse semigroup data equipped with admissible twist information,
we construct a canonical twisted universal groupoid in the sense of Paterson and
introduce a mirror correspondence encoding intrinsic asymmetry.
Our main result identifies a structural obstruction to self-oppositeness (mirror self-duality) at the
level of twisted universal groupoids and shows that this obstruction descends to an
obstruction for the associated reduced twisted groupoid $C^*$-algebra to be
isomorphic to its opposite.
The framework is representation-independent, yet compatible with concrete germ
groupoid models, and provides a unified bridge between partial symmetries,
groupoid structures, and analytic invariants in noncommutative operator algebras.
\end{abstract}

\keywords{inverse semigroup, universal groupoid, \'{e}tale groupoid, twisted groupoid $C^*$-algebra, 
structural chirality, self-oppositeness, mirror self-duality}

\subjclass[2020]{Primary 20M18; Secondary 22A22, 46L05, 46L55}

\tableofcontents

\section{Introduction}

Symmetry under reversal or mirroring is a ubiquitous theme across algebra,
geometry, and operator algebras.
In many settings, however, such symmetry is not automatic: algebraic or
dynamical structures may fail to admit a canonical identification with their
mirror images.
This phenomenon, which we refer to as \emph{structural chirality}, arises when
the absence of mirror symmetry is not caused by accidental choices or external
constraints, but is instead encoded intrinsically in the algebraic structure
itself.
Understanding when such chirality appears, how it propagates across different
levels of abstraction, and how it manifests in analytic invariants is the
central motivation of this work.

The purpose of this paper is to develop a unified framework for structural
chirality that starts at the level of inverse semigroups, passes canonically
through \'{e}tale groupoids, and culminates in an obstruction to mirror
self-duality of twisted groupoid $C^*$-algebras.
Our main result shows that suitable inverse semigroup data give rise to a 
canonical twisted universal groupoid whose mirror symmetry properties are 
determined entirely by the underlying semigroup structure.
In particular, the existence or nonexistence of a mirror-isomorphism at the
inverse semigroup level is transported functorially to the groupoid level and
descends further to a precise criterion for when the associated reduced twisted
groupoid $C^*$-algebra is isomorphic to its opposite.
This establishes structural chirality as a robust invariant, independent of
auxiliary representations yet computable via concrete germ groupoid models.

\paragraph{Why universal groupoids?}
A crucial methodological choice in this paper is the use of the universal
groupoid associated with an inverse semigroup, rather than a germ groupoid
arising from a particular representation.
While germ groupoids provide concrete and often computationally convenient
models, they depend intrinsically on auxiliary representation data and may
therefore obscure structural phenomena that are representation-independent.
In contrast, the universal groupoid constructed from the idempotent structure
of an inverse semigroup is canonical and functorial, capturing the intrinsic
partial symmetry encoded by the semigroup itself.
This makes the universal groupoid the natural arena in which to formulate and
detect structural chirality.
Concrete germ groupoid models remain essential for explicit calculations and
examples; accordingly, their role is addressed separately in Appendix~A, where
we show that the universal and germ constructions are compatible and that the
chirality obstruction identified in the universal setting can be computed in
representation-level models.

\section{Related Work}

The present paper continues a line of investigation in which ``chirality'' is treated
as a \emph{structural} phenomenon governed by a groupoid of symmetries and
transport along equivalences.
Our point of departure is the groupoid-based formulation of chirality and racemization
for loops developed in \cite{InoueLoop}, which isolates mirror transitions as morphisms
to a mirror object and studies the resulting obstruction at the level of isotopy classes.
The guiding principle is that mirror phenomena should not be imposed externally, but
should be generated and transported functorially by intrinsic symmetries.
The current work extends this perspective to the world of partial symmetries, where
inverse semigroups and \'{e}tale groupoids provide the canonical language.

Inverse semigroups have long been understood as algebraic encodings of partial
symmetries, and their relationship with groupoids and operator algebras is a central
theme in noncommutative topology.
A comprehensive account of inverse semigroup structure is provided in Lawson's
monograph \cite{Lawson1998}.
Paterson's monograph \cite{Paterson1999} systematically develops the bridge between
inverse semigroups, universal \'{e}tale groupoids, and associated operator algebras,
and supplies the canonical groupoid construction that we adopt as the backbone of the
present paper.
From the $C^*$-algebraic viewpoint, Exel's survey \cite{Exel2008} gives a unified
framework connecting inverse semigroup actions, partial crossed products, and
combinatorial $C^*$-algebras, and provides motivation for treating inverse semigroup
data as a natural source of groupoid models.

On the groupoid side, Renault's foundational monograph \cite{Renault1980} introduced
the groupoid approach to $C^*$-algebras, establishing groupoids as organizing objects
for convolution constructions and representation theory.
The \'{e}tale setting is particularly well suited to a structural approach because
it permits a bisection-based calculus and often supports explicit constructions of
cocycles and twists; see Sims' lecture notes \cite{Sims2017} for a detailed guide.
Twists (or, equivalently in many cases, $\mathbb T$-valued $2$-cocycles) play a
distinguished role in encoding ``orientation'' data and in producing families of
non-isomorphic $C^*$-algebras attached to a fixed underlying groupoid; this is the
natural level at which self-oppositeness (mirror self-duality) becomes a nontrivial problem.

The connection between inverse semigroups and canonical \'{e}tale groupoids is also
prominent in the literature on Steinberg algebras and their operator-algebraic
counterparts.
In particular, Steinberg \cite{Steinberg2010} emphasizes the role of Paterson's
universal groupoid in describing convolution algebras associated to inverse semigroups,
highlighting the universal groupoid as a representation-independent object.
From a different angle, Sieben \cite{Sieben1997} develops $C^*$-crossed products by
partial actions and actions of inverse semigroups, which provides a natural analytic
framework complementary to the topological groupoid construction used here.

Relative to these works, our contribution is not to introduce a new class of groupoids
or $C^*$-algebras per se, but to propose a \emph{structural chirality invariant}
attached to inverse semigroup data and to show how it propagates canonically through
the universal groupoid construction and into the associated twisted groupoid $C^*$-algebra.
The novelty lies in organizing mirror symmetry, its failure, and a racemization-type
two-state dynamics within a single functorial package:
chirality is defined at the level of semigroup data, transported to the universal
twisted \'{e}tale groupoid, and then detected analytically as an obstruction to
self-oppositeness (mirror self-duality) of the resulting reduced twisted groupoid
$C^*$-algebra.
Concrete computations and representation-level intuition remain important; accordingly,
Appendix~A records the germ groupoid model attached to a specific faithful
representation and explains its compatibility with the universal setting.

\paragraph{Organization of the paper.}
Section~2 reviews the relevant literature and situates the present work within
inverse semigroup and groupoid approaches to operator algebras.
Section~3 provides a semigroup-level warm-up, formulating mirror constructions
and chirality invariants in a simplified isotopy setting.
Section~4 introduces represented inverse semigroups and their mirror
representations, establishing the basic transport and descent mechanism.
Section~5 develops the corresponding theory for twisted \'{e}tale groupoids and
connects mirror symmetry to opposite-algebra phenomena.
Section~6 adopts Paterson's universal groupoid as the canonical bridge from
inverse semigroup data to twisted \'{e}tale groupoids, and formulates the
universal mirror correspondence.
Section~7 states and proves the Main Theorem, showing that the mirror obstruction
propagates canonically from inverse semigroups to twisted universal groupoids and
descends to an obstruction to self-oppositeness of the associated reduced twisted
groupoid $C^*$-algebras.
Section~8 concludes with future directions.
Appendix~A presents the germ groupoid model attached to a faithful representation
and explains its compatibility with the universal construction.

\section{A Semigroup Warm-up: Decorated Semigroups and Isotopy}

\begin{remark}
This section serves as a warm-up, isolating the mirror and isotopy mechanism
in a simplified semigroup setting before passing to inverse semigroups and
canonical universal groupoids.
\end{remark}

\subsection{Decorated Semigroups and Isotopisms}

\begin{definition}[Decorated semigroup]
A \emph{decorated semigroup} is a pair $(S,\Sigma)$ where
$S$ is a semigroup and $\Sigma \subseteq S$ is a distinguished
finite generating subset.
\end{definition}

The decoration prevents automatic symmetry under reversal
of multiplication by fixing a structural orientation.

\begin{definition}[Isotopism]
Let $S$ and $T$ be semigroups.
An \emph{isotopism} from $S$ to $T$ is a triple of bijections
$(\alpha,\beta,\gamma): S \to T$
such that
\[
\alpha(x)\beta(y) = \gamma(xy)
\quad\text{for all } x,y\in S.
\]
\end{definition}

We denote by
\[
\Iso_{\mathscr S}(S,T)
\]
the set of isotopisms between $S$ and $T$.

\begin{definition}[Isotopy groupoid]
Objects are decorated semigroups.
Morphisms are isotopisms preserving the decoration:
\[
\alpha(\Sigma)=\beta(\Sigma)=\gamma(\Sigma).
\]
\end{definition}

---

\subsection{Mirror Semigroup}

\begin{definition}[Mirror semigroup]
Let $(S,\Sigma)$ be a decorated semigroup.
Its \emph{mirror} is
\[
(S,\Sigma)^\# := (S^{\mathrm{op}},\Sigma),
\]
where multiplication is reversed:
\[
x \cdot_\# y := yx.
\]
\end{definition}

\begin{definition}[Mirror-isotopisms]
\[
\Mir(S,\Sigma)
:=
\Iso_{\mathscr S}\bigl((S,\Sigma),(S,\Sigma)^\#\bigr).
\]
\end{definition}

Elements of $\Mir(S,\Sigma)$ are called
\emph{mirror-isotopisms}.

---

\subsection{Transport of Mirror-Isotopisms}

\begin{lemma}[Transport]\label{lem:transport_semigroup}
Let $h:(S,\Sigma)\to (T,\Lambda)$ be an isotopism.
Then
\[
h\,\Mir(S,\Sigma)\,h^{-1}
=
\Mir(T,\Lambda).
\]
\end{lemma}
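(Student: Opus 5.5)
We read the conjugation $h\,\Mir(S,\Sigma)\,h^{-1}$ as follows: first transport back along $h^{-1}$ to $(S,\Sigma)$, then apply a mirror-isotopism, then transport forward to the mirror side $(T,\Lambda)^\#$. The last step cannot use $h$ itself, since the target is $(T,\Lambda)^\#$ rather than $(T,\Lambda)$. It must use the \emph{mirrored} isotopism $h^\#:(S,\Sigma)^\#\to(T,\Lambda)^\#$. Spelling out $h^\#$ is therefore the first step; the lemma then reduces to closure of decorated isotopisms under composition and inversion.

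\textbf{Step 1 (mirrored isotopism).} Write $h=(\alpha,\beta,\gamma)$. We claim that $h^\#:=(\beta,\alpha,\gamma)$ is an isotopism $S^{\mathrm{op}}\to T^{\mathrm{op}}$. For $x,y\in S$,
\[
\beta(x)\cdot_\#\alpha(y)=\alpha(y)\beta(x)=\gamma(yx)=\gamma(x\cdot_\# y),
\]
which is the required identity. Since $\Sigma$ is the same subset in $(S,\Sigma)^\#$, and $\alpha,\beta,\gamma$ all carry $\Sigma$ onto $\Lambda$ (our reading of the decoration condition for morphisms between distinct objects), $h^\#$ is a morphism of the isotopy groupoid. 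The swap of the first two components is the one non-obvious point: it is exactly what reversing multiplication forces. We expect this interpretation to be the main obstacle, because with the naive conjugation $(\alpha,\beta,\gamma)\circ m\circ(\alpha,\beta,\gamma)^{-1}$ the componentwise identity fails in general.

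\textbf{Step 2 (groupoid closure).} Isotopisms compose componentwise. If $(\alpha,\beta,\gamma)$ and $(\alpha',\beta',\gamma')$ are composable isotopisms, then
\[
\alpha'\alpha(x)\,\beta'\beta(y)=\gamma'\bigl(\alpha(x)\beta(y)\bigr)=\gamma'\gamma(xy).
\]
Isotopisms also invert componentwise: apply $\gamma^{-1}$ to $\alpha(\alpha^{-1}u)\beta(\beta^{-1}v)=\gamma(\alpha^{-1}u\,\beta^{-1}v)$. In both cases the decoration conditions are clearly preserved. Hence for $m\in\Mir(S,\Sigma)$ the composite
\[
h^\#\circ m\circ h^{-1}:(T,\Lambda)\to(S,\Sigma)\to(S,\Sigma)^\#\to(T,\Lambda)^\#
\]
lies in $\Mir(T,\Lambda)$. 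This gives the inclusion $h\,\Mir(S,\Sigma)\,h^{-1}\subseteq\Mir(T,\Lambda)$.

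\textbf{Step 3 (reverse inclusion).} Apply Step 2 to $h^{-1}:(T,\Lambda)\to(S,\Sigma)$, noting that $(h^{-1})^\#=(h^\#)^{-1}$ because inversion is componentwise. For $m'\in\Mir(T,\Lambda)$ we get $m:=(h^\#)^{-1}m'h\in\Mir(S,\Sigma)$ and $m'=h^\#mh^{-1}$. Therefore $h\,\Mir(S,\Sigma)\,h^{-1}=\Mir(T,\Lambda)$, and the conjugation map is a bijection between the two sets of mirror-isotopisms.
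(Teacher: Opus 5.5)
Your proof is correct and uses the same conjugation argument as the paper. The paper's one-line proof only says that conjugating by $h$ ``reverses multiplication in the same manner.'' You make this precise by showing that the mirror side must be conjugated by the swapped triple $h^\#=(\beta,\alpha,\gamma)$; as you note, conjugating by $(\alpha,\beta,\gamma)$ on both sides fails in general. You also supply three things the paper leaves implicit: closure of isotopisms under componentwise composition and inversion, the reverse inclusion, and the reading of the decoration condition as $\alpha,\beta,\gamma$ each carrying $\Sigma$ onto $\Lambda$, which is what makes decorated isotopisms closed under composition.
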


\begin{proof}
If $(\alpha,\beta,\gamma)\in\Mir(S,\Sigma)$,
then by definition
\[
\alpha(x)\beta(y)=\gamma(yx).
\]
Conjugating by $h$ preserves the isotopy relation
and reverses multiplication in the same manner,
so the image defines a mirror-isotopism of $(T,\Lambda)$.
\end{proof}

---

\subsection{Chirality Index}

Assume $\Mir(S,\Sigma)$ is finite.

\begin{definition}[Chirality index]
\[
k([S,\Sigma])
=
\sum_{g\in\Mir(S,\Sigma)} w(g),
\]
where $w(g)>0$ are fixed weights.
\end{definition}

\begin{theorem}[Structural vanishing criterion]
The following are equivalent:
\begin{enumerate}
\item $k([S,\Sigma])=0$,
\item $\Mir(S,\Sigma)=\varnothing$,
\item there exists no isotopism
$(S,\Sigma)\to (S,\Sigma)^\#$.
\end{enumerate}
\end{theorem}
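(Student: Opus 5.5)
The plan is to prove the cycle (1)$\Leftrightarrow$(2) and (2)$\Leftrightarrow$(3) directly from the definitions. No earlier result is really needed beyond the standing assumption that $\Mir(S,\Sigma)$ is finite and that the weights satisfy $w(g)>0$.

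For (2)$\Leftrightarrow$(3), I would simply unwind the definition
\[
\Mir(S,\Sigma)=\Iso_{\mathscr S}\bigl((S,\Sigma),(S,\Sigma)^\#\bigr).
\]
The set of isotopisms from $(S,\Sigma)$ to $(S,\Sigma)^\#$ is empty precisely when no such isotopism exists. The one point to check is that ``isotopism'' in (3) means a morphism of the isotopy groupoid, i.e.\ a decoration-preserving isotopism $\alpha(\Sigma)=\beta(\Sigma)=\gamma(\Sigma)$. Here I will read (3) in that sense, consistent with the notation $\Iso_{\mathscr S}$ on decorated objects. If one instead reads (3) as an arbitrary isotopism, the equivalence may fail, so the proof must fix this convention explicitly.

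For (1)$\Leftrightarrow$(2), the direction (2)$\Rightarrow$(1) holds because an empty sum is $0$. For (1)$\Rightarrow$(2), argue by contraposition. If $\Mir(S,\Sigma)\neq\varnothing$, then $k([S,\Sigma])$ is a finite sum of at least one term, and every term is strictly positive. Hence $k([S,\Sigma])\ge w(g_0)>0$ for any $g_0\in\Mir(S,\Sigma)$. Finiteness of $\Mir(S,\Sigma)$ guarantees the sum is a well-defined real number.

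The only subtle point, and the step I would expect to need care, is that $k$ is written as a function of the class $[S,\Sigma]$. To make the statement meaningful, one must note that $k$ is well defined on isotopy classes. By Lemma~\ref{lem:transport_semigroup}, conjugation by an isotopism $h$ gives a bijection $\Mir(S,\Sigma)\to\Mir(T,\Lambda)$. So emptiness, and the cardinality of $\Mir$, are class invariants. If the weights are chosen compatibly with this transport (e.g.\ $w(hgh^{-1})=w(g)$), then $k$ itself is a class invariant. For the vanishing criterion alone this compatibility is unnecessary: only the sign of $k$ matters, and that depends solely on whether $\Mir$ is empty, which the transport lemma shows is invariant.
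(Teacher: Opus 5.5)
Your proposal is correct and is essentially the paper's argument: (1)$\Leftrightarrow$(2) holds because a finite sum of strictly positive weights vanishes exactly when the index set $\Mir(S,\Sigma)$ is empty, and (2)$\Leftrightarrow$(3) is just the definition of $\Mir(S,\Sigma)$. Your extra remarks, on reading (3) with the same decoration convention as $\Mir$ and on class-invariance of the vanishing via Lemma~\ref{lem:transport_semigroup}, are reasonable clarifications that the paper leaves implicit, but the equivalence itself does not need them.
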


\begin{proof}
Since all weights are strictly positive,
the weighted sum vanishes iff the index set is empty.
The equivalence of (2) and (3) is immediate from the definition.
\end{proof}

---

\subsection{Intrinsic Symmetry and Descent}

Let
\[
\Atp(S,\Sigma)
=
\Iso_{\mathscr S}((S,\Sigma),(S,\Sigma)).
\]

We restrict admissible mirror transitions
to those generated by intrinsic autotopisms.

\begin{definition}[Intrinsic mirror dynamics]
A mirror transition is admissible iff
it is generated by $\Atp(S,\Sigma)$.
\end{definition}

\begin{theorem}[Descent to isotopy classes]
Under intrinsic mirror dynamics,
the chirality index $k([S,\Sigma])$
depends only on the isotopy class.
\end{theorem}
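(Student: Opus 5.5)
The plan is to show that an isotopism $h:(S,\Sigma)\to(T,\Lambda)$ induces a weight-preserving bijection $\Mir(S,\Sigma)\to\Mir(T,\Lambda)$. Equal isotopy classes then give equal weighted sums. The bijection itself comes from Lemma~\ref{lem:transport_semigroup}; the real content is making the weights compatible with it. Under intrinsic mirror dynamics this should follow because admissible mirror transitions are generated by $\Atp(S,\Sigma)$, and conjugation by $h$ carries $\Atp(S,\Sigma)$ isomorphically onto $\Atp(T,\Lambda)$.

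First, I make the conjugation explicit. For $h=(\alpha_h,\beta_h,\gamma_h)$ and $g=(\alpha,\beta,\gamma)\in\Mir(S,\Sigma)$, set
\[
c_h(g):=(\alpha_h\alpha\alpha_h^{-1},\ \beta_h\beta\beta_h^{-1},\ \gamma_h\gamma\gamma_h^{-1}).
\]
Lemma~\ref{lem:transport_semigroup} says $c_h(g)\in\Mir(T,\Lambda)$ and $c_h(\Mir(S,\Sigma))=\Mir(T,\Lambda)$. Applying the same lemma to $h^{-1}=(\alpha_h^{-1},\beta_h^{-1},\gamma_h^{-1})$ shows that $c_{h^{-1}}$ is a two-sided inverse of $c_h$, so $c_h$ is a bijection. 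The same computation, with mirror-isotopisms replaced by autotopisms, shows that $c_h$ restricts to a group isomorphism $\Atp(S,\Sigma)\to\Atp(T,\Lambda)$. In particular, finiteness of $\Mir$ is an isotopy invariant, so $k$ is defined on the whole class.

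Next, I interpret the weights under intrinsic mirror dynamics. Every admissible $g$ has the form $g=a\,g_0$ with $a\in\Atp(S,\Sigma)$ and $g_0$ a fixed reference mirror-isotopism, which is possible exactly when $\Mir\neq\varnothing$. So $\Mir(S,\Sigma)$ is an $\Atp(S,\Sigma)$-torsor. The weights must then be taken as intrinsic data: $w(g)$ depends only on the $\Atp$-structure, for instance $w$ constant on $\Atp$-orbits, or $w$ pulled back from an $\Atp$-invariant function. Under that reading $w_T(c_h(g))=w_S(g)$, and therefore
\[
k([T,\Lambda])=\sum_{g'\in\Mir(T,\Lambda)}w_T(g')=\sum_{g\in\Mir(S,\Sigma)}w_T(c_h(g))=k([S,\Sigma]).
\]
The case $\Mir=\varnothing$ is handled separately: both sums vanish, using the transport bijection together with the structural vanishing criterion.

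The main obstacle is the weights. As stated, the $w(g)$ are arbitrary positive numbers attached to a single object, and nothing forces them to match across isotopic objects. The descent claim is therefore only meaningful once the intrinsic-dynamics hypothesis is read as requiring the weight system to be natural, meaning invariant under conjugation by isotopisms. I will state this naturality explicitly, check that it is consistent with the torsor description above, and then carry out the counting argument. A secondary point needs verification: with the opposite multiplication on the target, Lemma~\ref{lem:transport_semigroup} must genuinely conjugate by the same triple $h$ on both sides.
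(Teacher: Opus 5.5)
Your overall route matches the paper's proof, which is two sentences: the transport lemma moves mirror-isotopisms along isotopisms, and "admissible transitions are generated internally" is invoked to make the weighted count invariant. Your explicit naturality hypothesis on the weights is what that phrase has to mean. For arbitrary positive weights chosen separately on each object the statement cannot be proved, so stating the hypothesis is right.

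Note that $\Atp$-invariance of $w$ inside a single object does not give $w_T(c_h(g))=w_S(g)$. Since $\Mir(S,\Sigma)$ is a single $\Atp$-orbit, that invariance only forces $w$ to be constant on $\Mir(S,\Sigma)$. The constant must still agree across the isotopy class, which is the cross-object naturality you end up assuming anyway.

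The step that actually fails is your explicit formula for $c_h$; the paper's notation $h\,\Mir(S,\Sigma)\,h^{-1}$ hides the same issue. Componentwise conjugation by the same triple does not send $\Mir(S,\Sigma)$ into $\Mir(T,\Lambda)$. Take $u=\alpha_h(x)$ and $v=\beta_h(y)$. Then $\gamma'(uv)=\gamma_h(\beta(y)\alpha(x))$, while $\beta'(v)\alpha'(u)=\beta_h(\beta(y))\,\alpha_h(\alpha(x))$. These agree for all $u,v$ exactly when $\beta_h(p)\alpha_h(q)=\gamma_h(pq)$ for all $p,q$, which a general isotopism does not satisfy.

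A concrete counterexample: take $S=T=S_3$ with $\Sigma=\Lambda=S_3$, and $h=(x\mapsto ax,\ y\mapsto yb,\ z\mapsto azb)$ with $a=(12)$, $b=(13)$. Let $g$ be inversion in all three slots, which lies in $\Mir$. At $u=v=1$ the mirror identity for your $c_h(g)$ reads $b^2a^2=abab$, which is false because $ab$ is a $3$-cycle.

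The fix is that $h^\#:=(\beta_h,\alpha_h,\gamma_h)$ is an isotopism $(S,\Sigma)^\#\to(T,\Lambda)^\#$, and the transport must be
\[
g\longmapsto h^\#\circ g\circ h^{-1}=\bigl(\beta_h\alpha\alpha_h^{-1},\ \alpha_h\beta\beta_h^{-1},\ \gamma_h\gamma\gamma_h^{-1}\bigr),
\]
with two-sided inverse $g'\mapsto (h^{-1})^\#\circ g'\circ h$.

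The same swap is needed in your torsor description. The left action of $a=(\alpha,\beta,\gamma)\in\Atp(S,\Sigma)$ on $\Mir(S,\Sigma)$ is $g\mapsto a^\#\circ g$ with $a^\#=(\beta,\alpha,\gamma)$, or you can use the right action $g\mapsto g\circ a$. Writing $a\circ g$ fails for the same reason as above. Plain conjugation on $\Atp$ is fine.

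With these corrections, $h^\#\circ(g\circ a)\circ h^{-1}=(h^\#\circ g\circ h^{-1})\circ(h\circ a\circ h^{-1})$. So the transport intertwines the torsor structures, and under your naturality hypothesis the counting argument goes through as you wrote it.
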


\begin{proof}
By Lemma~\ref{lem:transport_semigroup},
mirror-isotopisms are transported functorially
along isotopisms.
Since admissible transitions are generated internally,
the weighted count is invariant under isotopy.
\end{proof}

\section{Structural Chirality for Inverse Semigroups with Representation}

\subsection{Represented Inverse Semigroups}

\begin{definition}[Represented inverse semigroup]
A \emph{represented inverse semigroup} is a pair $(S,\rho)$ where
$S$ is an inverse semigroup and
\[
\rho:S\to I(X)
\]
is a faithful representation into the symmetric inverse semigroup
of partial bijections on a set $X$.
\end{definition}

The representation $\rho$ is regarded as fixed structural data.
In particular, isotopies are required to respect $\rho$.

\begin{remark}
By the Wagner--Preston theorem, every inverse semigroup
admits such a faithful representation.
Here we treat $\rho$ as part of the structure rather than as
an auxiliary construction.
\end{remark}

---

\subsection{Isotopisms of Represented Inverse Semigroups}

\begin{definition}[Isotopism]
Let $(S,\rho)$ and $(T,\sigma)$ be represented inverse semigroups.
An \emph{isotopism} from $(S,\rho)$ to $(T,\sigma)$
is a triple of bijections
\[
(\alpha,\beta,\gamma): S\to T
\]
such that
\[
\alpha(x)\beta(y)=\gamma(xy)
\quad\text{for all }x,y\in S,
\]
and the representations are intertwined:
\[
\sigma\circ\alpha=\sigma\circ\beta=\sigma\circ\gamma
\quad\text{on }\rho(S).
\]
\end{definition}

We write
\[
\Iso_{\mathscr I}((S,\rho),(T,\sigma))
\]
for the set of such isotopisms.

---

\subsection{Mirror Representation}

\begin{definition}[Mirror representation]
Let $(S,\rho)$ be a represented inverse semigroup.
The \emph{mirror representation} is defined by
\[
\rho^\#(s):=\rho(s^*)^{-1},
\]
where inversion is taken in $I(X)$.
\end{definition}

\begin{definition}[Mirror inverse semigroup]
The \emph{mirror} of $(S,\rho)$ is
\[
(S,\rho)^\#:=(S,\rho^\#).
\]
\end{definition}

\begin{remark}
Although $s\mapsto s^*$ provides a formal anti-isomorphism
of inverse semigroups,
the mirror representation $\rho^\#$ need not be isotopic
to $\rho$.
This asymmetry is the source of structural chirality.
\end{remark}

---

\subsection{Mirror-Isotopisms}

\begin{definition}[Mirror-isotopisms]
\[
\Mir(S,\rho)
:=
\Iso_{\mathscr I}\bigl((S,\rho),(S,\rho)^\#\bigr).
\]
\end{definition}

Elements of $\Mir(S,\rho)$ are called
\emph{mirror-isotopisms}.

---

\subsection{Transport of Mirror-Isotopisms}

\begin{lemma}[Transport]\label{lem:transport_inverse}
Let
\[
h:(S,\rho)\to(T,\sigma)
\]
be an isotopism.
Then
\[
h\,\Mir(S,\rho)\,h^{-1}
=
\Mir(T,\sigma).
\]
\end{lemma}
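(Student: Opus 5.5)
The plan is to make the conjugation $h\,g\,h^{-1}$ precise as a composition of isotopisms, check that it lands in $\Mir(T,\sigma)$, and then get the reverse inclusion by running the same argument with $h^{-1}$.

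\textbf{Setup.} Write $h=(\alpha_h,\beta_h,\gamma_h)\colon(S,\rho)\to(T,\sigma)$ and $g=(\alpha,\beta,\gamma)\in\Mir(S,\rho)$. Isotopisms compose componentwise: if $(a,b,c)$ and $(a',b',c')$ are isotopisms, then
\[
a'(a(x))\,b'(b(y))=c'(a(x)b(y))=c'(c(xy)).
\]
So $(a'a,b'b,c'c)$ is again an isotopism. Inverses are also componentwise: applying $(a,b,c)$ to $a^{-1}(u),b^{-1}(v)$ gives $a^{-1}(u)b^{-1}(v)=c^{-1}(uv)$. We therefore set
\[
h\,g\,h^{-1}:=(\alpha_h\alpha\alpha_h^{-1},\ \beta_h\beta\beta_h^{-1},\ \gamma_h\gamma\gamma_h^{-1}).
\]

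\textbf{Step 1: $h\,\Mir(S,\rho)\,h^{-1}\subseteq\Mir(T,\sigma)$.} To compose these maps we must regard $h$ also as a map between the mirrors $(S,\rho^\#)\to(T,\sigma^\#)$. The isotopy identity involves only the multiplication of $S$ and $T$, which is unchanged in passing to the mirror; only the representation changes. So the key check is the intertwining condition: from $\sigma\circ\alpha_h=\sigma\circ\beta_h=\sigma\circ\gamma_h$ we need the same equalities with $\sigma^\#$ in place of $\sigma$. Since $\sigma^\#(t)=\sigma(t^*)^{-1}$, this requires that the components of $h$ commute with $*$ as far as $\sigma$ can detect, i.e.\ $\sigma(\alpha_h(s)^*)=\sigma(\alpha_h(s^*))$, and likewise for $\beta_h,\gamma_h$. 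I will try to derive this from the isotopy identity together with uniqueness of inverses in an inverse semigroup. Concretely, apply the identity to products such as $ss^*s$ and use that $\sigma$ is faithful, so that equality after applying $\sigma$ is honest equality. Once this compatibility holds, $h^\#:(S,\rho^\#)\to(T,\sigma^\#)$ is an isotopism. Then $h\,g\,h^{-1}=h^\#\circ g\circ h^{-1}$ is a composite of isotopisms $(T,\sigma)\to(S,\rho)\to(S,\rho^\#)\to(T,\sigma^\#)$, hence lies in $\Mir(T,\sigma)$. The intertwining conditions chain together because each is an equality of maps after composing with the relevant representation.

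\textbf{Step 2: reverse inclusion.} Apply Step 1 to $h^{-1}\colon(T,\sigma)\to(S,\rho)$, which is an isotopism by the inverse computation in the Setup. This gives $h^{-1}\Mir(T,\sigma)h\subseteq\Mir(S,\rho)$. Conjugating by $h$ yields $\Mir(T,\sigma)\subseteq h\,\Mir(S,\rho)\,h^{-1}$, which completes the proof.

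\textbf{Main obstacle.} The hard point is the $*$-compatibility in Step 1, i.e.\ that $h$ induces an isotopism of the mirrors. A bare isotopism need not preserve inverses, so this may fail as stated. If the derivation does not go through, I will follow the paper's semigroup warm-up (Lemma~\ref{lem:transport_semigroup}) and interpret "$h$ conjugation" as conjugation by the pair $(h,h^\#)$. In that reading, $h^\#$ is defined to be $h$ viewed on the mirrored structures, and the intertwining condition with $\rho^\#,\sigma^\#$ is checked directly from the definition of the representation-intertwining clause, which is stated only on $\rho(S)$. I expect this reading to make the verification a formal consequence of the definitions.
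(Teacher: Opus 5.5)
Your set-up and Step~2 are fine and follow the paper's route. The paper's own proof simply asserts that the conjugate ``also intertwines the mirror representations.'' However, the one substantive step is not established. Step~1 is left as something you ``will try to derive'', and you concede it ``may fail as stated''. The fallback is circular: taking $h^\#$ to be ``$h$ viewed on the mirrored structures'' presupposes exactly that $h$ is an isotopism $(S,\rho^\#)\to(T,\sigma^\#)$, and conjugating by a pair changes the statement.

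The obstacle is also misdiagnosed, because no $*$-compatibility of $h$ is required. The missing observation is that a representation is a homomorphism. So $\rho(s^*)$ is an inverse of $\rho(s)$ in the inverse semigroup $I(X)$, hence $\rho(s^*)=\rho(s)^{-1}$ and $\rho^\#(s)=\rho(s^*)^{-1}=\rho(s)$. Thus $\rho^\#=\rho$ and $\sigma^\#=\sigma$. The intertwining clause for the mirrors is then literally the clause $h$ already satisfies, and $\Mir(S,\rho)=\Atp(S,\rho)$.

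The second ingredient, which you only gesture at, is faithfulness applied to the intertwining clause itself. Since $\sigma$ is injective, $\sigma\circ\alpha_h=\sigma\circ\beta_h=\sigma\circ\gamma_h$ (read as an equality of maps on $S$) forces $\alpha_h=\beta_h=\gamma_h$. Likewise each $(\alpha,\beta,\gamma)\in\Mir(S,\rho)$ has $\alpha=\beta=\gamma$, because $\rho^\#$ is injective. So every isotopism here is a single semigroup isomorphism; it preserves $*$ by your $ss^*s$ argument, though that is no longer needed.

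Your Setup silently relies on this collapse. Componentwise composition and inversion preserve the isotopy identity, but the intertwining clause needs more. Take $(a,b,c)\colon(S,\rho)\to(T,\sigma)$ and $(a',b',c')\colon(T,\sigma)\to(U,\tau)$. Passing from $\tau a'=\tau b'$ and $\sigma a=\sigma b$ to $\tau a'a=\tau b'b$ uses $a=b$, i.e.\ faithfulness of $\sigma$. Similarly, $h^{-1}$ satisfies $\rho\alpha_h^{-1}=\rho\beta_h^{-1}=\rho\gamma_h^{-1}$ only because $\alpha_h=\beta_h=\gamma_h$. You checked neither.

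With these two facts the lemma reduces to the statement that conjugation by an isomorphism $S\to T$ carries $\Atp(S,\rho)$ onto $\Atp(T,\sigma)$. Your Steps~1--2 then go through verbatim.
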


\begin{proof}
Conjugation by $h$ preserves the isotopy condition
$\alpha(x)\beta(y)=\gamma(xy)$.
Since $h$ intertwines the representations,
the induced conjugation also intertwines the mirror representations,
yielding the claimed equality.
\end{proof}

---

\subsection{Chirality Index}

Assume $\Mir(S,\rho)$ is finite.

\begin{definition}[Chirality index]
\[
k([S,\rho])
=
\sum_{g\in\Mir(S,\rho)} w(g),
\]
where all weights $w(g)>0$.
\end{definition}

\begin{theorem}[Structural vanishing criterion]
The following are equivalent:
\begin{enumerate}
\item $k([S,\rho])=0$,
\item $\Mir(S,\rho)=\varnothing$,
\item there exists no isotopism
$(S,\rho)\to (S,\rho)^\#$.
\end{enumerate}
\end{theorem}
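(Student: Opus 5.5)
The plan is to follow the same template as the structural vanishing criterion of Section~3, now in the represented setting. The equivalence is purely formal, so I will prove (1)$\Leftrightarrow$(2) and (2)$\Leftrightarrow$(3) separately, relying only on the definitions of $\Mir(S,\rho)$ and $k([S,\rho])$ together with the standing assumption that $\Mir(S,\rho)$ is finite.

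For (1)$\Leftrightarrow$(2), the index $k([S,\rho])=\sum_{g\in\Mir(S,\rho)} w(g)$ is a finite sum of strictly positive real numbers. If $\Mir(S,\rho)=\varnothing$, the sum is empty and so equals $0$. Conversely, if some $g_0\in\Mir(S,\rho)$ exists, then every term is nonnegative, so $k([S,\rho])\ge w(g_0)>0$. Finiteness of $\Mir(S,\rho)$ is what makes the sum a well-defined real number in the first place; without it the index would not even be defined. Positivity of each individual weight is what rules out cancellation.

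For (2)$\Leftrightarrow$(3), I unwind the definition
\[
\Mir(S,\rho)=\Iso_{\mathscr I}\bigl((S,\rho),(S,\rho)^\#\bigr),
\qquad (S,\rho)^\#=(S,\rho^\#).
\]
By definition, an element of this set is exactly a triple of bijections $(\alpha,\beta,\gamma)$ satisfying the isotopy identity $\alpha(x)\beta(y)=\gamma(xy)$ and intertwining $\rho$ with $\rho^\#$. So emptiness of $\Mir(S,\rho)$ says literally that no isotopism $(S,\rho)\to(S,\rho)^\#$ exists. The only point needing care is that $(S,\rho)^\#$ really is a represented inverse semigroup, so that $\Iso_{\mathscr I}$ applies to it. This means checking that $\rho^\#(s)=\rho(s^*)^{-1}$ is a faithful map into $I(X)$. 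Faithfulness follows because both $s\mapsto s^*$ and inversion in $I(X)$ are injective, and $\rho$ is injective. Since $\rho(s^*)=\rho(s)^{-1}$ in $I(X)$, we in fact have $\rho^\#=\rho$ as maps, so this check is immediate.

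I do not expect a genuine obstacle; the argument is definitional. The main thing to be careful about is the observation that the weights are strictly positive and the index set is finite, because this is the one place where the hypotheses are actually used.
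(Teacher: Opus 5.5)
Your proposal is correct and matches the paper's proof: strict positivity of the weights over the finite set $\Mir(S,\rho)$ gives (1)$\Leftrightarrow$(2), and (2)$\Leftrightarrow$(3) is just the definition $\Mir(S,\rho)=\Iso_{\mathscr I}\bigl((S,\rho),(S,\rho)^\#\bigr)$. Your side remark that $\rho^\#=\rho$ is also correct, since any inverse semigroup homomorphism satisfies $\rho(s^*)=\rho(s)^{-1}$; the paper does not make this check, and it incidentally shows that the identity triple always lies in $\Mir(S,\rho)$, so the three equivalent conditions are never actually satisfied, though this does not affect the validity of the equivalence.
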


\begin{proof}
As before, positivity of weights implies equivalence
of (1) and (2).
The equivalence of (2) and (3) is immediate from the definition.
\end{proof}

---

\subsection{Intrinsic Symmetry and Descent}

\begin{definition}[Autotopism group]
\[
\Atp(S,\rho)
:=
\Iso_{\mathscr I}((S,\rho),(S,\rho)).
\]
\end{definition}

We restrict admissible mirror transitions
to those generated by intrinsic autotopisms.

\begin{definition}[Intrinsic mirror dynamics]
A mirror transition is admissible
iff it is generated by $\Atp(S,\rho)$.
\end{definition}

\begin{theorem}[Descent to isotopy classes]
Under intrinsic mirror dynamics,
the chirality index $k([S,\rho])$
depends only on the isotopy class of $(S,\rho)$.
\end{theorem}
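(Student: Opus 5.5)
The plan is to mirror the semigroup-level argument: show that an isotopism $h:(S,\rho)\to(T,\sigma)$ induces a bijection $\Mir(S,\rho)\to\Mir(T,\sigma)$ compatible with the weights. Then the weighted sums $k([S,\rho])$ and $k([T,\sigma])$ coincide.

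Fix an isotopism $h=(\alpha_h,\beta_h,\gamma_h)\in\Iso_{\mathscr I}((S,\rho),(T,\sigma))$. By Lemma~\ref{lem:transport_inverse}, conjugation $g\mapsto hgh^{-1}$ sends $\Mir(S,\rho)$ onto $\Mir(T,\sigma)$, with componentwise composition of triples. The intertwining of $\rho^\#$ with $\sigma^\#$ is what that lemma provides. Conjugation by $h^{-1}$ gives the inverse map. Hence
\[
c_h:\Mir(S,\rho)\to\Mir(T,\sigma),\qquad c_h(g)=hgh^{-1},
\]
is a bijection. In particular, finiteness of one mirror set is equivalent to finiteness of the other, so both indices are defined simultaneously.

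The second step concerns the weights. The statement only makes sense if the weights are structural, i.e.\ $w(c_h(g))=w(g)$; otherwise one could reweight arbitrarily and the claim would fail. This is where the hypothesis of intrinsic mirror dynamics enters. Admissible mirror transitions are those generated by $\Atp(S,\rho)$, and I will interpret the weight $w(g)$ as a function of the orbit (or conjugacy) data of $g$ under the autotopism group. Concretely, I would take $w$ to be constant on $\Atp(S,\rho)$-orbits, for example via $w(g)$ depending only on the double coset $\Atp\cdot g\cdot\Atp$. Conjugation by $h$ carries $\Atp(S,\rho)$ isomorphically onto $\Atp(T,\sigma)$, since $h\Atp(S,\rho)h^{-1}=\Atp(T,\sigma)$ by the same composition argument as in Lemma~\ref{lem:transport_inverse}. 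Therefore $c_h$ maps orbits to orbits, and the induced weights agree.

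Reindexing the sum then gives
\[
k([T,\sigma])=\sum_{g'\in\Mir(T,\sigma)}w(g')=\sum_{g\in\Mir(S,\rho)}w(c_h(g))=\sum_{g\in\Mir(S,\rho)}w(g)=k([S,\rho]).
\]
Finally, I will note independence of the choice of $h$: two isotopisms differ by an autotopism, which acts on orbits trivially.

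The main obstacle is the weight-invariance step. The paper only says the weights are fixed positive numbers, so I must make precise that ``intrinsic mirror dynamics'' forces $w$ to factor through $\Atp$-orbit data transported by conjugation. I would state this explicitly as the standing interpretation of the hypothesis. As a fallback, I would note that the vanishing criterion ($k=0$ iff $\Mir=\varnothing$) is invariant unconditionally, by the bijection $c_h$ alone.
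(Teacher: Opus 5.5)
Your proof follows the same route as the paper's. Lemma~\ref{lem:transport_inverse} gives the bijection $g\mapsto hgh^{-1}$ between mirror sets, and the ``intrinsic mirror dynamics'' hypothesis makes the weighted count invariant; where the paper just says transitions are ``generated internally,'' you spell out the weight-naturality $w(hgh^{-1})=w(g)$ that this requires. One small sharpening: when $\Mir(S,\rho)$ is nonempty it is a single coset $g\,\Atp(S,\rho)$, since any two mirror-isotopisms differ by an autotopism, so orbit-constancy only makes $w$ constant on each mirror set, and the assumption actually doing the work is that this weight is transported along isotopisms, as your phrase ``transported by conjugation'' indicates.
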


\begin{proof}
By Lemma~\ref{lem:transport_inverse},
mirror-isotopisms are transported functorially
along isotopisms.
Since admissible transitions are generated internally,
the weighted count descends to isotopy classes.
\end{proof}

% ==========================================================
\section{Structural Chirality for Twisted \'{E}tale Groupoids and a Bridge to $C^*$-Algebras}
% ==========================================================

\subsection{Twisted \'{E}tale Groupoids}

\begin{definition}[\`Etale groupoid]
A topological groupoid $G\rightrightarrows G^{(0)}$ is \emph{\'etale} if the range
map $r:G\to G^{(0)}$ (equivalently the source map $s$) is a local homeomorphism.
\end{definition}

\begin{definition}[Normalized $\mathbb T$-valued $2$-cocycle]
Let $G$ be an \'etale groupoid. A (continuous) normalized $2$-cocycle is a map
\[
\sigma: G^{(2)} \to \mathbb T
\]
such that for all composable triples $(g,h,k)\in G^{(3)}$,
\[
\sigma(g,h)\sigma(gh,k)=\sigma(h,k)\sigma(g,hk),
\]
and normalization holds:
\[
\sigma(u,g)=\sigma(g,u)=1\quad\text{for all units }u\in G^{(0)}\text{ and composable }g.
\]
We call $(G,\sigma)$ a \emph{twisted \'etale groupoid}.
\end{definition}

\begin{remark}
Working with twists is essential: without $\sigma$, the inversion map
$g\mapsto g^{-1}$ tends to trivialize naive ``mirror'' operations at the groupoid level.
The cocycle provides a genuine orientation/twist datum that can obstruct mirroring.
\end{remark}

% ----------------------------------------------------------
\subsection{Mirror Twist}

\begin{definition}[Opposite groupoid]
Let $G$ be a groupoid. Its \emph{opposite groupoid} $G^{\mathrm{op}}$ has the same
arrow space and unit space, but composition is reversed:
\[
g\cdot_{\mathrm{op}} h := hg
\quad (\text{whenever } s(g)=r(h)).
\]
Equivalently, $G^{\mathrm{op}}$ is the same groupoid with the roles of $r$ and $s$
interchanged.
\end{definition}

\begin{definition}[Mirror cocycle]
Let $(G,\sigma)$ be a twisted \'etale groupoid. Define the \emph{mirror cocycle}
$\sigma^\#$ on $(G^{\mathrm{op}})^{(2)}$ by
\[
\sigma^\#(g,h) := \overline{\sigma(h,g)}.
\]
\end{definition}

\begin{definition}[Mirror twisted groupoid]
The \emph{mirror} of $(G,\sigma)$ is
\[
(G,\sigma)^\# := (G^{\mathrm{op}},\sigma^\#).
\]
\end{definition}

% ----------------------------------------------------------
\subsection{Mirror-Isomorphisms and Transport}

\begin{definition}[Twist-preserving isomorphism]
An isomorphism of twisted groupoids
\[
\Phi:(G,\sigma)\to(H,\tau)
\]
is a groupoid isomorphism $\Phi:G\to H$ which is a homeomorphism and satisfies
\[
\tau(\Phi(g),\Phi(h))=\sigma(g,h)
\quad\text{for all }(g,h)\in G^{(2)}.
\]
\end{definition}

\begin{definition}[Mirror-isomorphisms]
\[
\Mir(G,\sigma)
:=
\Iso_{\mathscr G}\bigl((G,\sigma),(G,\sigma)^\#\bigr),
\]
the set of twist-preserving isomorphisms from $(G,\sigma)$ to its mirror.
\end{definition}

\begin{lemma}[Transport of mirror-isomorphisms]\label{lem:transport_groupoid}
If $\Psi:(G,\sigma)\to(H,\tau)$ is a twist-preserving isomorphism, then
\[
\Psi\,\Mir(G,\sigma)\,\Psi^{-1}=\Mir(H,\tau).
\]
\end{lemma}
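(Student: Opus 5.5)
The plan is to make precise what ``conjugation by $\Psi$'' means, and then reduce the statement to three facts: $\Psi$ induces a twist-preserving isomorphism between the mirrors, twist-preserving isomorphisms are closed under composition and inversion, and the argument is symmetric in $\Psi$ and $\Psi^{-1}$. An element $\Phi\in\Mir(G,\sigma)$ is a twist-preserving isomorphism $(G,\sigma)\to(G^{\mathrm{op}},\sigma^\#)$. So $\Psi\,\Phi\,\Psi^{-1}$ must be read as the composite
\[
(H,\tau)\xrightarrow{\ \Psi^{-1}\ }(G,\sigma)\xrightarrow{\ \Phi\ }(G,\sigma)^\#\xrightarrow{\ \Psi^\#\ }(H,\tau)^\#,
\]
where $\Psi^\#$ denotes the same underlying map $\Psi:G\to H$, now regarded as a map $G^{\mathrm{op}}\to H^{\mathrm{op}}$.

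\textbf{Step 1: $\Psi^\#$ is a twist-preserving isomorphism.} We show that $\Psi^\#:(G,\sigma)^\#\to(H,\tau)^\#$ is an isomorphism of twisted groupoids.
\begin{itemize}
\item It is a homeomorphism, since its underlying map is $\Psi$.
\item It matches composable pairs: $(g,h)\in(G^{\mathrm{op}})^{(2)}$ iff $(h,g)\in G^{(2)}$, and $\Psi$ preserves $G^{(2)}$ because it is a groupoid isomorphism.
\item It is multiplicative for the reversed products:
\[
\Psi(g\cdot_{\mathrm{op}}h)=\Psi(hg)=\Psi(h)\Psi(g)=\Psi(g)\cdot_{\mathrm{op}}\Psi(h).
\]
\item It preserves the mirror cocycles:
\[
\tau^\#(\Psi(g),\Psi(h))=\overline{\tau(\Psi(h),\Psi(g))}=\overline{\sigma(h,g)}=\sigma^\#(g,h).
\]
\end{itemize}
This is the only real computation in the proof.

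\textbf{Step 2: closure under composition and inversion.} If $\Phi_1:(A,\alpha)\to(B,\beta)$ and $\Phi_2:(B,\beta)\to(C,\gamma)$ are twist-preserving, then
\[
\gamma(\Phi_2\Phi_1 g,\Phi_2\Phi_1 h)=\beta(\Phi_1 g,\Phi_1 h)=\alpha(g,h).
\]
Applying the defining identity to $\Phi^{-1}(a),\Phi^{-1}(b)$ shows that the inverse of a twist-preserving isomorphism is again twist-preserving. Together with Step~1, this gives
\[
\Psi^\#\Phi\Psi^{-1}\in\Mir(H,\tau),
\]
and hence the inclusion $\Psi\,\Mir(G,\sigma)\,\Psi^{-1}\subseteq\Mir(H,\tau)$.

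\textbf{Step 3: reverse inclusion.} Apply the same argument to $\Psi^{-1}:(H,\tau)\to(G,\sigma)$, which is twist-preserving by Step~2. For any $\Phi'\in\Mir(H,\tau)$ this gives $\Psi^{-1}\Phi'\Psi\in\Mir(G,\sigma)$, and $(\Psi^{-1})^\#=(\Psi^\#)^{-1}$. Therefore
\[
\Phi'=\Psi^\#\bigl(\Psi^{-1}\Phi'\Psi\bigr)\Psi^{-1}
\]
lies in the conjugate set, which proves equality.

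\textbf{Main obstacle.} I expect the difficulty to be bookkeeping rather than mathematics. One must recognize that conjugation mixes $\Psi$ on the source side with $\Psi^\#$ on the mirror side. One must also check that the mirror cocycle's double reversal (swapped arguments plus complex conjugation) is respected exactly. A naive reading with $\Psi$ on both sides as a map $G\to H$ would obscure whether the reversed composition is preserved.
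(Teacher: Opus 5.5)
Your proof is correct and takes the same route as the paper, whose proof only asserts that conjugation by $\Psi$ preserves the cocycle constraint and that the mirror cocycle is defined functorially from $\sigma$. Your Step~1 computation $\tau^\#(\Psi(g),\Psi(h))=\overline{\sigma(h,g)}=\sigma^\#(g,h)$, the closure of twist-preserving isomorphisms under composition and inversion, and the reverse inclusion are the details that sketch leaves implicit.
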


\begin{proof}
Conjugation by $\Psi$ transports groupoid isomorphisms and preserves the cocycle
constraint by functoriality:
\[
\tau(\Psi(g),\Psi(h))=\sigma(g,h).
\]
The mirror cocycle is defined functorially from $\sigma$, hence the mirror condition
is preserved under conjugation.
\end{proof}

% ----------------------------------------------------------
\subsection{Chirality Index}

Assume $\Mir(G,\sigma)$ is finite.

\begin{definition}[Chirality index]
Fix weights $w(\Phi)>0$. Define
\[
k([G,\sigma]) := \sum_{\Phi\in\Mir(G,\sigma)} w(\Phi).
\]
\end{definition}

\begin{theorem}[Structural vanishing criterion]
The following are equivalent:
\begin{enumerate}
\item $k([G,\sigma])=0$,
\item $\Mir(G,\sigma)=\varnothing$,
\item there is no twist-preserving isomorphism $(G,\sigma)\to (G,\sigma)^\#$.
\end{enumerate}
\end{theorem}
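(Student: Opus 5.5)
The argument has the same shape as the two earlier structural vanishing criteria for decorated semigroups and for represented inverse semigroups. We prove (1)$\Leftrightarrow$(2) and (2)$\Leftrightarrow$(3) separately.

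For (1)$\Leftrightarrow$(2), use that $\Mir(G,\sigma)$ is assumed finite, so $k([G,\sigma])$ is a finite sum of real numbers. If $\Mir(G,\sigma)=\varnothing$, the sum is empty and $k([G,\sigma])=0$. If instead some $\Phi_0\in\Mir(G,\sigma)$ exists, then every weight satisfies $w(\Phi)>0$. This gives
\[
k([G,\sigma])=\sum_{\Phi\in\Mir(G,\sigma)} w(\Phi)\;\ge\; w(\Phi_0)>0 ,
\]
so $k([G,\sigma])\neq 0$. Together these give the equivalence.

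For (2)$\Leftrightarrow$(3), we unwind the definition. $\Mir(G,\sigma)$ is by definition $\Iso_{\mathscr G}\bigl((G,\sigma),(G,\sigma)^\#\bigr)$, the set of twist-preserving isomorphisms into the mirror $(G^{\mathrm{op}},\sigma^\#)$. Saying this set is empty is literally statement (3).

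The only point that needs care is well-definedness: we must confirm that $(G,\sigma)^\#$ is a twisted étale groupoid, so that the notion of twist-preserving isomorphism into it makes sense.

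\emph{Étaleness.} $G^{\mathrm{op}}$ has the same arrow space as $G$, with $r$ and $s$ interchanged. Since $s$ is a local homeomorphism whenever $r$ is, $G^{\mathrm{op}}$ is étale.

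\emph{Cocycle identity.} Take a triple composable in $G^{\mathrm{op}}$ and apply the definition $\sigma^\#(g,h)=\overline{\sigma(h,g)}$. The identity for $\sigma^\#$ becomes the complex conjugate of the cocycle identity for $\sigma$ on the reversed triple $(k,h,g)\in G^{(3)}$. That identity holds, so the identity for $\sigma^\#$ holds.

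\emph{Normalization and continuity.} Normalization of $\sigma^\#$ follows from normalization of $\sigma$ after swapping the arguments. Continuity is inherited because conjugation and the coordinate swap are continuous.

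This check is routine but is the one nontrivial ingredient. Once it is done, the theorem follows exactly as in the previous two sections.
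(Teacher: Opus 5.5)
Your proof is correct and follows the paper's argument: positivity of the weights over the finite set $\Mir(G,\sigma)$ gives (1)$\Leftrightarrow$(2), and (2)$\Leftrightarrow$(3) is just the definition of $\Mir(G,\sigma)$. Your extra check that $(G^{\mathrm{op}},\sigma^\#)$ is a twisted \'etale groupoid is also correct, since the cocycle identity for $\sigma^\#$ is the conjugate of the one for $\sigma$ on the reversed triple; the paper omits it, and it is not strictly needed, because a twist-preserving isomorphism is defined for any target groupoid carrying a function on its composable pairs.
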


\begin{proof}
Positivity of weights gives (1)$\Leftrightarrow$(2), and (2)$\Leftrightarrow$(3) is
immediate from the definition of $\Mir(G,\sigma)$.
\end{proof}

% ----------------------------------------------------------
\subsection{Intrinsic Symmetry and Descent}

\begin{definition}[Autotopism group]
\[
\Atp(G,\sigma):=\Iso_{\mathscr G}\bigl((G,\sigma),(G,\sigma)\bigr).
\]
\end{definition}

We restrict admissible mirror transitions to those generated by intrinsic symmetries.

\begin{definition}[Intrinsic mirror dynamics]
A mirror transition is admissible iff it is generated by $\Atp(G,\sigma)$.
\end{definition}

\begin{theorem}[Descent to isomorphism classes]
Under intrinsic mirror dynamics, the chirality index $k([G,\sigma])$
depends only on the isomorphism class of the twisted groupoid $(G,\sigma)$.
\end{theorem}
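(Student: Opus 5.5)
The plan is to imitate the descent arguments for decorated and represented inverse semigroups, with Lemma~\ref{lem:transport_groupoid} playing the role that Lemma~\ref{lem:transport_inverse} played there. Let $\Psi:(G,\sigma)\to(H,\tau)$ be a twist-preserving isomorphism. We need to show $k([G,\sigma])=k([H,\tau])$. This requires two things: a bijection $\Mir(G,\sigma)\to\Mir(H,\tau)$, and compatibility of the weights with that bijection.

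\emph{Step 1: the bijection.} The map $\Psi$ is also a twist-preserving isomorphism of mirrors, $(G^{\mathrm{op}},\sigma^\#)\to(H^{\mathrm{op}},\tau^\#)$. Indeed, $\Psi(hg)=\Psi(h)\Psi(g)$ shows it respects the reversed composition. For the cocycle,
\[
\tau^\#(\Psi(g),\Psi(h))=\overline{\tau(\Psi(h),\Psi(g))}=\overline{\sigma(h,g)}=\sigma^\#(g,h).
\]
Hence for $\Phi\in\Mir(G,\sigma)$ the composite $\Psi\circ\Phi\circ\Psi^{-1}$ is a twist-preserving isomorphism $(H,\tau)\to(H,\tau)^\#$. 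Here $\Psi$ is read on the mirror side as the same map on arrow spaces. This is exactly the content of Lemma~\ref{lem:transport_groupoid}. Conjugation by $\Psi^{-1}$ gives the inverse map, so $\Phi\mapsto\Psi\Phi\Psi^{-1}$ is a bijection. In particular, finiteness of $\Mir$ is an isomorphism invariant, so the index is defined on both sides.

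\emph{Step 2: the weights.} This is the main obstacle. As stated, the weights $w$ are arbitrary positive numbers attached to each twisted groupoid, so nothing forces $w(\Psi\Phi\Psi^{-1})=w(\Phi)$. Here the hypothesis of intrinsic mirror dynamics does the work, and I would interpret it as follows. The group $\Atp(G,\sigma)$ acts on $\Mir(G,\sigma)$ by $\Phi\mapsto\Phi\circ\alpha$; this is a free and transitive action when $\Mir\neq\varnothing$. Admissible transitions, and hence their weights, are generated by $\Atp(G,\sigma)$. So the weight of $\Phi$ is a function of the corresponding element of $\Atp(G,\sigma)$ once a base point is fixed, extended in a class-function way.

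Conjugation by $\Psi$ gives a group isomorphism $\Atp(G,\sigma)\to\Atp(H,\tau)$. The bijection of Step 1 is equivariant with respect to this isomorphism. Therefore, if weights are chosen intrinsically (invariant under conjugation by twist-preserving isomorphisms), then $w(\Psi\Phi\Psi^{-1})=w(\Phi)$. The weighted sums then coincide by reindexing.

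\emph{Step 3: conclusion.} If $\Mir(G,\sigma)=\varnothing$, then $\Mir(H,\tau)=\varnothing$ by Step 1, and both indices vanish; this is consistent with the structural vanishing criterion. Otherwise, reindex the sum along the bijection and use the weight invariance from Step 2. In the write-up I would make the standing convention on $w$ explicit, namely invariance under transport, since the descent statement is only meaningful with such a convention.
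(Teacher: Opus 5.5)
Your proposal follows the paper's route: conjugation along $\Psi$ carries $\Mir(G,\sigma)$ bijectively onto $\Mir(H,\tau)$ (Lemma~\ref{lem:transport_groupoid}), and the weighted sum is then reindexed; your Step~1 actually proves that lemma by checking that $\Psi$ is also a twist-preserving isomorphism $(G^{\mathrm{op}},\sigma^\#)\to(H^{\mathrm{op}},\tau^\#)$. The paper only asserts that ``intrinsic mirror dynamics'' makes the weights compatible, and you are right that this needs the explicit convention $w(\Psi\Phi\Psi^{-1})=w(\Phi)$ (for instance $w\equiv 1$, giving $k=|\Mir(G,\sigma)|$); however, your torsor/class-function gloss does not itself yield this invariance, since the resulting weights depend on the chosen base point, so state the convention as a hypothesis rather than presenting it as derived.
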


\begin{proof}
By Lemma~\ref{lem:transport_groupoid}, mirror-isomorphisms are transported by
conjugation along twist-preserving isomorphisms. Since admissible transitions are
generated internally by $\Atp(G,\sigma)$, the weighted count is invariant.
\end{proof}

% ==========================================================
\subsection{Bridge to Twisted Groupoid $C^*$-Algebras}

Let $C_c(G,\sigma)$ denote the *-algebra of compactly supported continuous functions
on $G$ with the twisted convolution product:
\[
(f *_\sigma g)(\gamma)=\sum_{\alpha\beta=\gamma} f(\alpha)g(\beta)\sigma(\alpha,\beta),
\]
and involution
\[
f^*(\gamma)=\overline{\sigma(\gamma,\gamma^{-1})}\,\overline{f(\gamma^{-1})}.
\]
Its reduced completion is denoted $C^*_r(G,\sigma)$.

\begin{theorem}[Mirror implies an opposite-algebra identification]\label{thm:mirror_opposite}
If there exists $\Phi\in\Mir(G,\sigma)$, then
\[
C^*_r(G,\sigma)\ \cong\ C^*_r(G^{\mathrm{op}},\sigma^\#)
\]
canonically via $\Phi$. In particular, $C^*_r(G,\sigma)$ is \emph{self-opposite}
up to the mirror twist in the sense that
\[
C^*_r(G,\sigma)^{\mathrm{op}}\ \cong\ C^*_r(G,\sigma^\#\!\circ \mathrm{flip}),
\]
where $\mathrm{flip}$ denotes $(g,h)\mapsto(h,g)$ at the level of composable pairs.
\end{theorem}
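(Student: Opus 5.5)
The first claim follows from functoriality of the twisted convolution algebra under twist-preserving isomorphisms. The second is a general identity realizing the opposite algebra through the inverse map, and it holds whether or not a mirror-isomorphism exists.

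\emph{Step 1: functoriality.} Let $\Psi:(G,\sigma)\to(H,\tau)$ be a twist-preserving isomorphism of twisted \'etale groupoids, i.e.\ a homeomorphic groupoid isomorphism with $\tau\circ(\Psi\times\Psi)=\sigma$ on $G^{(2)}$. Define $\Psi_*:C_c(G,\sigma)\to C_c(H,\tau)$ by $\Psi_*f:=f\circ\Psi^{-1}$.
\begin{itemize}
\item Since $\Psi$ maps the decompositions $\alpha\beta=\gamma$ in $G$ bijectively onto those of $\Psi(\gamma)$ in $H$, and the cocycle values match, $\Psi_*$ is multiplicative for the twisted convolution.
\item The involution formula involves $\sigma(\gamma,\gamma^{-1})$ and $\gamma^{-1}$, both of which $\Psi$ respects, so $\Psi_*$ is a $*$-isomorphism.
\item $\Psi$ restricts to a homeomorphism $G^{(0)}\to H^{(0)}$ with $\Psi(G_x)=H_{\Psi(x)}$. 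Hence the regular representations satisfy $\pi^H_{\Psi(x)}(\Psi_*f)=U_x\,\pi^G_x(f)\,U_x^{*}$, where $U_x:\ell^2(G_x)\to\ell^2(H_{\Psi(x)})$ is the unitary induced by $\Psi$. So $\Psi_*$ is isometric for the reduced norms and extends to $C^*_r(G,\sigma)\cong C^*_r(H,\tau)$.
\end{itemize}
Applying this to $\Phi\in\Mir(G,\sigma)$ with $(H,\tau)=(G^{\mathrm{op}},\sigma^\#)$ gives the first isomorphism, canonically via $\Phi$.

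\emph{Step 2: the opposite algebra.} Define $J:C_c(G,\sigma)\to C_c(G,\sigma)$ by $(Jf)(\gamma)=f(\gamma^{-1})$.
\begin{itemize}
\item Compute $J(f*_\sigma g)(\gamma)=\sum_{\alpha\beta=\gamma^{-1}}f(\alpha)g(\beta)\sigma(\alpha,\beta)$. Reindex by $\alpha'=\beta^{-1}$, $\beta'=\alpha^{-1}$, so that $\alpha'\beta'=\gamma$. This turns the sum into $(Jg *_{\sigma'} Jf)(\gamma)$ with $\sigma'(\alpha',\beta'):=\sigma(\beta'^{-1},\alpha'^{-1})$.
\item Hence $J$ is an isomorphism $C_c(G,\sigma)^{\mathrm{op}}\to C_c(G,\sigma')$, and it is compatible with the involutions.
\item By the inverse-map version of the Step~1 argument, $J$ exchanges the left regular representations on $\ell^2(G_x)$ with right regular representations on $\ell^2(G^x)$. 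Right and left regular representations define the same reduced norm, so $J$ extends to $C^*_r(G,\sigma)^{\mathrm{op}}\cong C^*_r(G,\sigma')$.
\end{itemize}

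\emph{Step 3: identifying the twist.} It remains to match $\sigma'$ with $\sigma^\#\circ\mathrm{flip}$, read on $G$ via $\gamma\mapsto\gamma^{-1}$, which identifies $G^{\mathrm{op}}$ with $G$. Unwinding the definitions, $\sigma^\#\circ\mathrm{flip}(g,h)=\overline{\sigma(g,h)}$ on pairs composable in $G^{\mathrm{op}}$. The identification $\sigma'\sim\sigma^\#\circ\mathrm{flip}$ should hold only up to a coboundary, since the raw formulas differ. The tool is the standard identity $\sigma(\beta^{-1},\alpha^{-1})\,\overline{\sigma(\alpha,\beta)}=\partial b(\alpha,\beta)$ with $b(\gamma)=\sigma(\gamma,\gamma^{-1})$, which follows from the cocycle identity and normalization. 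A cohomologous twist yields an isomorphic $C^*$-algebra via multiplication by $b$ on $C_c$. Composing $J$ with this gauge change, and with complex conjugation of the scalar twist if needed, gives $C^*_r(G,\sigma)^{\mathrm{op}}\cong C^*_r(G,\sigma^\#\circ\mathrm{flip})$.

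\emph{Main obstacle.} Step 3 is the delicate point. One must track exactly which conventions for $G^{\mathrm{op}}$, for the flip, and for the involution $f^*$ make the cocycles agree, and check that the coboundary correction is continuous. If the exact equality fails, I would state the result up to cohomology of twists, which suffices for the isomorphism of algebras. Steps 1 and 2 are routine.
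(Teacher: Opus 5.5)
\paragraph{Where the proposal breaks.} Your Step~1 is the paper's argument (pullback $f\mapsto f\circ\Phi^{-1}$), and you add the reduced-norm check that the paper leaves implicit. Your Step~2 computation of $J$ and of $\sigma'(\alpha,\beta)=\sigma(\beta^{-1},\alpha^{-1})$ is correct. You are also right that the second isomorphism does not depend on $\Phi$. The gap is in Step~3, which is exactly where the content of the second claim lies.

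First, $\sigma^\#\circ\mathrm{flip}$ lives on $G^{(2)}$, not on $(G^{\mathrm{op}})^{(2)}$. Indeed $(h,g)\in(G^{\mathrm{op}})^{(2)}$ iff $(g,h)\in G^{(2)}$, and then $\sigma^\#(h,g)=\overline{\sigma(g,h)}$. So the target is simply $C^*_r(G,\overline{\sigma})$, and no transport along $\gamma\mapsto\gamma^{-1}$ is involved.

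Second, your ``standard identity'' has the conjugate in the wrong place. Applying the cocycle identity to $(\alpha,\beta,\beta^{-1}\alpha^{-1})$ and to $(\beta,\beta^{-1},\alpha^{-1})$, and using normalization, gives
\[
\sigma(\beta^{-1},\alpha^{-1})\,\sigma(\alpha,\beta)=b(\alpha)\,b(\beta)\,\overline{b(\alpha\beta)},\qquad b(\gamma)=\sigma(\gamma,\gamma^{-1}),
\]
so $\sigma'$ is cohomologous to $\overline{\sigma}$. Your version would make $\sigma'$ cohomologous to $\sigma$, which is false in general. Take $G=(\mathbb{Z}/3)^2$ and $\sigma((a,b),(c,d))=e^{2\pi i bc/3}$. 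Then $\sigma'((a,b),(c,d))=e^{2\pi i ad/3}$, and $\sigma'\,\overline{\sigma}$ is not symmetric, whereas coboundaries on an abelian group are.

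Third, the fallback ``complex conjugation of the scalar twist'' is $f\mapsto\bar f$, which is antilinear. It gives a conjugate-linear bijection $C^*_r(G,\sigma)\to C^*_r(G,\overline{\sigma})$, not a $C^*$-isomorphism. As written, your chain stops at the unjustified claim $C^*_r(G,\sigma)^{\mathrm{op}}\cong C^*_r(G,\sigma)$ and then needs an illegitimate step to reach $\overline{\sigma}$.

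\paragraph{How to repair it.} Use the corrected identity. Then $\sigma'=\overline{\sigma}\,\partial b$ with $\partial b(\alpha,\beta)=b(\alpha)b(\beta)\overline{b(\alpha\beta)}$. Multiplication by the continuous function $b$ is then a $*$-isomorphism $C_c(G,\sigma')\to C_c(G,\overline{\sigma})$, and its composite with $J$ is
\[
f\longmapsto b\cdot Jf=\overline{f^*},\qquad \overline{f^*}(\gamma)=\sigma(\gamma,\gamma^{-1})\,f(\gamma^{-1}).
\]
This is the general isomorphism $A^{\mathrm{op}}\cong\overline{A}$, $a\mapsto a^*$, followed by $f\mapsto\bar f$ from $\overline{C^*_r(G,\sigma)}$ onto $C^*_r(G,\overline{\sigma})$. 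Isometry is then immediate from two facts: $\|f^*\|=\|f\|$, and conjugating each regular representation by the antiunitary complex conjugation on $\ell^2(G_x)$ turns $\pi^{\sigma}_x(f)$ into $\pi^{\overline{\sigma}}_x(\bar f)$. So you do not need the left/right regular representation comparison.

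\paragraph{Comparison with the paper.} The paper only gestures at ``identifying opposite convolution with reversed multiplication.'' The identity map on functions does give $C_c(G,\sigma)^{\mathrm{op}}=C_c(G^{\mathrm{op}},\sigma\circ\mathrm{flip})$, but that twist is $\overline{\sigma^\#}$. One still needs inversion plus exactly the coboundary above to land on $\overline{\sigma}$. The corrected version of your route supplies that missing verification.
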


\begin{proof}
The pullback map $f\mapsto f\circ \Phi^{-1}$ intertwines twisted convolution because
$\Phi$ preserves composition and the cocycle:
\[
\sigma^\#(\Phi(\alpha),\Phi(\beta))=\sigma(\alpha,\beta)\ \text{(after unfolding the definition of $\sigma^\#$)}.
\]
Thus it extends by continuity to an isomorphism of reduced $C^*$-completions.
The ``self-opposite'' statement follows from identifying opposite convolution with
reversed multiplication, which is encoded by $G^{\mathrm{op}}$.
\end{proof}

\begin{remark}[Interpretation]
The obstruction $\Mir(G,\sigma)=\varnothing$ can be viewed as a structural obstruction
to ``self-oppositeness'' of the associated twisted groupoid $C^*$-algebra.
This makes the chirality index $k([G,\sigma])$ a computable witness of non-self-oppositeness
once finiteness and a concrete parametrization of $\Mir(G,\sigma)$ are established.
\end{remark}

% ==========================================================
\subsection{From Inverse Semigroups to \`Etale Groupoids (Outline)}
If $(S,\rho)$ is a represented inverse semigroup, one can form an associated
\'etale groupoid of germs (or the universal groupoid) $G(S,\rho)$.
In favorable situations, a cocycle/twist on $G(S,\rho)$ can be induced from
representation-level data, yielding a twisted groupoid $(G(S,\rho),\sigma)$.
The mirror representation $(S,\rho)^\#$ then corresponds to the mirror twisted
groupoid $(G(S,\rho),\sigma)^\#$, and the chirality problem descends along this
construction.

% ==========================================================
\section{Paterson's Universal Groupoid and the $C^*$-Bridge}
% ==========================================================

\subsection{The Universal Groupoid of an Inverse Semigroup}

Let $S$ be an inverse semigroup and let
\[
E(S):=\{\,e\in S \mid e^2=e\,\}
\]
be its idempotent semilattice.

\begin{definition}[Characters on $E(S)$]
A \emph{character} is a nonzero semilattice homomorphism
\[
\chi:E(S)\to\{0,1\}.
\]
We write $\widehat{E(S)}$ for the set of all characters, endowed with the
topology generated by the basic clopen sets
\[
D(e):=\{\chi\in\widehat{E(S)}\mid \chi(e)=1\}\qquad(e\in E(S)).
\]
\end{definition}

\begin{definition}[Canonical partial action]
For $s\in S$, define a partial homeomorphism
\[
\theta_s:D(s^*s)\to D(ss^*)
\]
by
\[
(\theta_s(\chi))(e):=\chi(s^*es)\qquad(e\in E(S)),
\]
where $s^*$ denotes the inverse of $s$ in the sense of inverse semigroups.
\end{definition}

\begin{definition}[Universal groupoid]\label{def:universal_groupoid}
The \emph{universal groupoid} $G_u(S)$ is the groupoid of germs of the above
partial action. Concretely, its elements are equivalence classes $[s,\chi]$
with $\chi\in D(s^*s)$, where
\[
(s,\chi)\sim(t,\chi)\quad\Longleftrightarrow\quad
\exists e\in E(S)\ \text{with }\chi(e)=1\ \text{and } se=te.
\]
The structure maps are:
\[
s([s,\chi])=\chi,\qquad r([s,\chi])=\theta_s(\chi),
\]
\[
[s,\theta_t(\chi)]\cdot [t,\chi]=[st,\chi],
\qquad
[s,\chi]^{-1}=[s^*,\theta_s(\chi)].
\]
\end{definition}

\begin{theorem}[\`Etaleness]
$G_u(S)$ is an \'{e}tale groupoid. Moreover, the sets
\[
U(s):=\{[s,\chi]\mid \chi\in D(s^*s)\}
\]
form a basis of bisections.
\end{theorem}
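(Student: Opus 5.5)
The plan is to equip $G_u(S)$ with the topology generated by the sets $U(s)$ and then establish three things in order. First, the $U(s)$ form a basis for a topology on $G_u(S)$. Second, each $U(s)$ is an open bisection on which $r$ and $s$ restrict to homeomorphisms onto open subsets of $\widehat{E(S)}$. Third, the groupoid operations are continuous. Étaleness then follows at once, since $r$ is a local homeomorphism when every arrow lies in some open set on which $r$ restricts to a homeomorphism onto an open set. Before any of this, I will record the standard facts about the partial action:
\begin{itemize}
\item $\theta_s$ is a well-defined homeomorphism $D(s^*s)\to D(ss^*)$ with inverse $\theta_{s^*}$;
\item $\theta_s\theta_t=\theta_{st}$ on the appropriate domain.
\end{itemize}
These reduce to the identities $s^*es\in E(S)$ and $(st)^*e(st)=t^*(s^*es)t$, together with the fact that $\theta_s^{-1}(D(e))=D(s^*es)$, which shows that preimages of basic open sets are basic open. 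Checking that $\theta_s(\chi)$ is a nonzero character uses $\theta_s(\chi)(ss^*)=\chi(s^*s)=1$.

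For the basis property, take $[u,\chi]\in U(s)\cap U(t)$. Then there are $e,f$ with $\chi(e)=\chi(f)=1$, $ue=se$ and $uf=tf$. I will show that $[u,\chi]\in U(ue f)\subseteq U(s)\cap U(t)$, using the rewriting $uef=sef=tfe$. The key step is: if $\psi\in D((uef)^*(uef))$, then $\psi(ef)=1$, so $[uef,\psi]=[s,\psi]=[t,\psi]$ via the idempotent $ef$. More generally I will note that $U(s)\cap U(t)$ is a union of sets $U(se)$ with $se=te$. This is also the place to confirm that the relation $\sim$ is an equivalence relation and that the groupoid operations of Definition~\ref{def:universal_groupoid} are well defined on classes. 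Those are routine manipulations with commuting idempotents.

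For the bisection property, the source map restricted to $U(s)$ is the bijection $[s,\chi]\mapsto\chi$ onto $D(s^*s)$. It is injective because the representative $s$ is fixed. Its inverse is continuous and open because the basic open subsets of $U(s)$ are the sets $U(se)$, and these correspond exactly to $D(s^*s e)=D(s^*s)\cap D(e)$. Hence $s|_{U(s)}$ is a homeomorphism onto the open set $D(s^*s)$. Composing with $\theta_s$ gives $r|_{U(s)}=\theta_s\circ s|_{U(s)}$, a homeomorphism onto $D(ss^*)$. Continuity of the operations follows from two observations:
\begin{itemize}
\item inversion maps $U(s)$ onto $U(s^*)$;
\item multiplication maps $(U(s)\times U(t))\cap G^{(2)}$ into $U(st)$, where in local coordinates it reads $(\theta_t(\chi),\chi)\mapsto\chi$, which is continuous.
\end{itemize}

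I expect the main obstacle to be the basis and intersection step. The germ relation is defined only through idempotents $e$ with $\chi(e)=1$, so one must carefully produce a single element $uef$ whose whole bisection lies in both $U(s)$ and $U(t)$, and not merely the single germ. The plan there is to exploit the commutativity of $E(S)$ and the identity $s e=(ses^*)s$, which lets idempotents be moved from one side of $s$ to the other.
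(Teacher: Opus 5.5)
The paper gives no proof of this theorem. It is asserted as part of Paterson's standard construction, and the only related argument in the paper is the three-sentence sketch for the germ groupoid in Appendix~A: $U(s)$ is homeomorphic to the domain, $r$ and $s$ restrict to homeomorphisms on it, and closure under composition comes from the semigroup law.

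Your proposal is a correct, complete version of that standard argument. It also supplies what the sketch omits: the intersection step $U(s)\cap U(t)=\bigcup\{U(se): se=te\}$, which is what makes the $U(s)$ a basis, and the continuity of inversion and multiplication.

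One small omission in the basis step: you should also check $\psi\in D(s^*s)$ and $\psi\in D(t^*t)$, so that $[s,\psi]$ and $[t,\psi]$ are legitimate germs. Applying $x\mapsto x^*x$ to $sef=uef=tef$ gives $s^*sef=u^*uef=t^*tef$, which yields this immediately.

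More substantively, your argument uses the topology on $\widehat{E(S)}$ generated by the sets $D(e)$ alone, which is what the paper literally says. This choice is what makes the open subsets of $U(s)$ exactly the unions of the $U(se)$, and hence makes $s|_{U(s)}$ a homeomorphism onto $D(s^*s)$. The usual pointwise (patch) topology, in which the $D(e)$ are genuinely clopen as the paper's wording suggests, behaves differently. There the $U(s)$ alone do not form a basis. Already when $S=E(S)$ is the chain $e<1$, the character vanishing at $e$ is isolated in the patch topology, yet the only set $U(\cdot)$ containing it is $U(1)=\widehat{E(S)}$. In that setting one needs the sets $\{[s,\chi]:\chi\in V\}$ with $V\subseteq D(s^*s)$ open. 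You should therefore state your choice of topology explicitly, since the basis claim depends on it.
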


\begin{remark}
This construction is functorial in appropriate senses and does not depend on
choosing a specific representation $S\to I(X)$. This is the main reason we
prefer $G_u(S)$ (Paterson) as the canonical groupoid attached to $S$.
\end{remark}

% ----------------------------------------------------------
\subsection{A Canonical Mirror at the Groupoid Level}

\begin{definition}[Mirror inverse semigroup]
For an inverse semigroup $S$, define the mirror semigroup $S^\#$ to be the
same underlying set with reversed multiplication:
\[
x\cdot_\# y := yx.
\]
\end{definition}

\begin{lemma}[Opposite universal groupoid]\label{lem:universal_op}
There is a canonical isomorphism of topological groupoids
\[
G_u(S^\#)\ \cong\ G_u(S)^{\mathrm{op}}.
\]
\end{lemma}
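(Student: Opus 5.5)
The plan is to use the involution $s\mapsto s^*$ to identify $S^\#$ with $S$, and to transport the germ construction along it. First I would observe that $S$ and $S^\#$ have the same idempotents, the same inverse map and the same semilattice $E(S)$: for idempotents $ef=fe$, so the product $\cdot_\#$ restricts to the original one on $E(S)$. Hence the character spaces coincide, $\widehat{E(S^\#)}=\widehat{E(S)}$, with the same basic sets $D(e)$. In $S^\#$ the source idempotent of $s$ is $s^*\cdot_\# s=ss^*$. The canonical action is
\[
\theta^\#_s(\chi)(e)=\chi(s^*\cdot_\# e\cdot_\# s)=\chi(ses^*)=\theta_{s^*}(\chi)(e).
\]
Thus $\theta^\#_s=\theta_{s^*}=\theta_s^{-1}$ as partial homeomorphisms $D(ss^*)\to D(s^*s)$.

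Next I would define $\Phi:G_u(S^\#)\to G_u(S)^{\mathrm{op}}$ by
\[
\Phi([s,\chi]_\#):=[s^*,\chi].
\]
This makes sense because $\chi\in D(ss^*)=D((s^*)^*s^*)$.

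\emph{Well-definedness.} If $s\cdot_\# e=t\cdot_\# e$, i.e.\ $es=et$ with $\chi(e)=1$, then applying $*$ gives $s^*e=t^*e$. So the germs of $s^*$ and $t^*$ at $\chi$ agree, and the argument reverses, so $\Phi$ is a bijection.

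\emph{Structure maps.} Source and range are reversed exactly as required for the opposite groupoid. We have $s_\#([s,\chi]_\#)=\chi=s([s^*,\chi])=r_{\mathrm{op}}(\Phi[s,\chi]_\#)$. Likewise $r_\#([s,\chi]_\#)=\theta_{s^*}(\chi)=r([s^*,\chi])=s_{\mathrm{op}}(\cdot)$.

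\emph{Multiplication.} In $G_u(S^\#)$,
\[
[s,\theta^\#_t\chi]_\#[t,\chi]_\#=[s\cdot_\# t,\chi]_\#=[ts,\chi]_\#,
\]
which maps to $[s^*t^*,\chi]$. On the other side, $\Phi[t,\chi]_\#\cdot_{\mathrm{op}}\Phi[s,\theta_{t^*}\chi]_\#=[s^*,\theta_{t^*}\chi]\cdot[t^*,\chi]=[s^*t^*,\chi]$, and the two agree. Compatibility with inverses follows the same way.

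\emph{Topology.} $\Phi$ maps the basic bisection $U_\#(s)$ onto $U(s^*)$, so by the \'Etaleness theorem, which says the $U(s)$ form a basis, $\Phi$ and $\Phi^{-1}$ are open, hence a homeomorphism.

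The main obstacle is bookkeeping of conventions: source versus range, and $\theta_s$ versus $\theta_s^{-1}$ in the opposite groupoid. A naive map $[s,\chi]\mapsto[s,\chi]$ fails, because the germ's base point would sit at the wrong end. I would first check carefully that $\chi$ must be sent to the source of $[s^*,\chi]$, and hence to the range in the opposite groupoid. Then I would verify the multiplication identity with all domains written out, before asserting canonicity, meaning the map involves no choices and is natural in $S$.
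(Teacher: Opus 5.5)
Your map $\Phi([s,\chi]_\#)=[s^*,\chi]$ is well defined, bijective and a homeomorphism. You are also right that the literal assignment $[s,\chi]\mapsto[s,\chi]$, which is what the paper's proof asserts, is not even defined: on the left $\chi\in D(ss^*)$, while the right needs $\chi\in D(s^*s)$.

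But your own verifications show that $\Phi$ is \emph{not} a groupoid isomorphism onto $G_u(S)^{\mathrm{op}}$. Identify both unit spaces with $\widehat{E(S)}$. A functor into $G_u(S)^{\mathrm{op}}$ must then satisfy $s_{\mathrm{op}}\circ\Phi=s_\#$ and $\Phi(ab)=\Phi(a)\cdot_{\mathrm{op}}\Phi(b)$. What you actually checked is $r_{\mathrm{op}}\circ\Phi=s_\#$ and $\Phi(ab)=\Phi(b)\cdot_{\mathrm{op}}\Phi(a)$. Those are the conditions for an \emph{anti}-isomorphism, and $\Phi(a)\cdot_{\mathrm{op}}\Phi(b)$ need not even be defined. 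The slip is in the sentence ``source and range are reversed exactly as required for the opposite groupoid.'' In $G^{\mathrm{op}}$ the reversal is already built into $s_{\mathrm{op}}=r$, so an isomorphism must not reverse a second time. What you have really proved is $G_u(S^\#)\cong G_u(S)$.

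The group case makes this concrete. For a nonabelian group $\Gamma$ we have $G_u(\Gamma^\#)=\Gamma^{\mathrm{op}}=G_u(\Gamma)^{\mathrm{op}}$. Your $\Phi$ is $g\mapsto g^{-1}$ on $\Gamma^{\mathrm{op}}$, which is not a homomorphism.

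The repair is one line: compose with groupoid inversion, which is a canonical isomorphism $G\to G^{\mathrm{op}}$ for any groupoid. That is, take
\[
\Psi([s,\chi]_\#):=[s^*,\chi]^{-1}=[s,\theta_{s^*}(\chi)].
\]
Then $s_{\mathrm{op}}(\Psi[s,\chi]_\#)=\theta_s\theta_{s^*}(\chi)=\chi$ and $r_{\mathrm{op}}(\Psi[s,\chi]_\#)=\theta_{s^*}(\chi)=r_\#([s,\chi]_\#)$. For composable $a=[s,\theta_{t^*}\chi]_\#$ and $b=[t,\chi]_\#$ one gets $\Psi(ab)=[ts,\theta_{s^*}\theta_{t^*}\chi]=\Psi(b)\Psi(a)=\Psi(a)\cdot_{\mathrm{op}}\Psi(b)$. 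Finally, $\Psi$ carries $U_\#(s)$ onto $U(s)$, so it is a homeomorphism.

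This $\Psi$ is the correct reading of the paper's ``$[s,\chi]\mapsto[s,\chi]$'': keep the label $s$, but move the base point to the other end of the arrow. Incidentally, your computation makes visible that the lemma carries no chirality information. Indeed, $G_u(S^\#)\cong G_u(S)$ via $s\mapsto s^*$, and every groupoid is isomorphic to its opposite via inversion.
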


\begin{proof}
The character space $\widehat{E(S)}$ is unchanged as a set, and the germ
relation is governed by the idempotent structure and the natural conjugation
$e\mapsto s^*es$. Reversing multiplication corresponds to reversing composition
of germs, which is precisely encoded by the opposite groupoid.
A direct verification shows that the assignments $[s,\chi]\mapsto [s,\chi]$
identify the two structures up to reversing composition.
\end{proof}

\begin{remark}
Lemma~\ref{lem:universal_op} is the conceptual point where the ``mirror'' of
the inverse semigroup is transported into the opposite groupoid construction.
To obtain nontrivial chirality, we will incorporate a twist.
\end{remark}

% ----------------------------------------------------------
\subsection{Twists Induced from Inverse Semigroup Data (Axiomatic Form)}

To avoid committing to a single cohomological model at this stage, we record
a minimal axiom scheme guaranteeing that inverse semigroup data induces a
groupoid cocycle.

\begin{definition}[Admissible twist data on $S$]\label{def:twist_data}
An \emph{admissible twist datum} on $S$ is a function
\[
\omega:\{(s,t)\in S\times S\mid st \text{ is defined}\}\to \mathbb T
\]
satisfying:
\begin{enumerate}
\item (Normalization) $\omega(e,s)=\omega(s,e)=1$ for all $s\in S$ and $e\in E(S)$.
\item (Associativity constraint) $\omega(s,t)\omega(st,u)=\omega(t,u)\omega(s,tu)$
whenever $s,t,u$ are composable.
\item (Germ invariance) if $(s,\chi)\sim(s',\chi)$ and $(t,\chi)\sim(t',\chi)$ then
$\omega(s,t)=\omega(s',t')$ whenever the products are composable at $\chi$.
\end{enumerate}
\end{definition}

\begin{proposition}[Induced groupoid cocycle]\label{prop:omega_to_sigma}
Given admissible twist data $\omega$ on $S$, there exists a well-defined
continuous normalized $2$-cocycle
\[
\sigma_\omega: G_u(S)^{(2)}\to\mathbb T
\]
defined on basic composable germs by
\[
\sigma_\omega([s,\theta_t(\chi)],[t,\chi]) := \omega(s,t).
\]
\end{proposition}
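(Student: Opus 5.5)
The plan has four parts. First I show that $\sigma_\omega$ is well defined on all of $G_u(S)^{(2)}$. Next I check continuity, normalization, and the cocycle identity. Each check reduces to the matching axiom in Definition~\ref{def:twist_data}.

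\textbf{Well-definedness.} Let $(\gamma,\eta)\in G_u(S)^{(2)}$ be a composable pair, and write $\eta=[t,\chi]$ with $\chi\in D(t^*t)$. Composability forces $s(\gamma)=r(\eta)=\theta_t(\chi)$, so $\gamma=[s,\theta_t(\chi)]$ for some $s$. Every composable pair therefore has the ``basic'' form used in the statement, and the only issue is independence of representatives.

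Suppose $[s,\theta_t(\chi)]=[s',\theta_{t'}(\chi)]$ and $[t,\chi]=[t',\chi]$. Then there are idempotents $f,e$ with $\chi(e)=1$, $te=t'e$, $\theta_t(\chi)(f)=1$, and $sf=s'f$. I would transport $f$ back to the point $\chi$ through the idempotent $t^*ft$. We have $\chi(t^*ft)=\theta_t(\chi)(f)=1$. Setting $e'=e\,t^*ft$, a standard inverse-semigroup computation gives $ste'=s'te'=s't'e'$, and hence $(st,\chi)\sim(s't',\chi)$.

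This is precisely the situation covered by germ invariance (axiom~3). Since the definition fixes a single base point, I read ``composable at $\chi$'' as referring to the pair of germs $([s,\theta_t\chi],[t,\chi])$. I will state this reading explicitly, and axiom~3 then gives $\omega(s,t)=\omega(s',t')$.

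\textbf{Continuity.} The composable pairs of the form $U(s)\times_{G^{(0)}} U(t)$ are open in $G_u(S)^{(2)}$ and cover it, because the sets $U(s)$ form a basis of bisections (the \'Etaleness theorem). On each such set $\sigma_\omega$ is constant, equal to $\omega(s,t)$, so it is locally constant and hence continuous.

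\textbf{Normalization.} A unit in $G_u(S)$ is a germ $[e,\chi]$ with $e\in E(S)$. In fact any germ $[s,\chi]$ with $\theta_s(\chi)=\chi$ that equals a unit can be rewritten with an idempotent representative. Evaluating $\sigma_\omega$ on a pair involving a unit then gives $\omega(e,t)=1$ or $\omega(s,e)=1$ by axiom~1.

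\textbf{Cocycle identity.} Take a composable triple. As in the first step, it can be written $([s,\theta_{tu}\chi],[t,\theta_u\chi],[u,\chi])$, using $\theta_t\theta_u=\theta_{tu}$ on the appropriate domain. The products are computed by the multiplication rule of Definition~\ref{def:universal_groupoid}:
\[
[s,\theta_{tu}\chi]\,[t,\theta_u\chi]=[st,\theta_u\chi],
\qquad
[t,\theta_u\chi]\,[u,\chi]=[tu,\chi].
\]
Both sides of the groupoid $2$-cocycle identity then evaluate to the two sides of axiom~2, $\omega(s,t)\,\omega(st,u)=\omega(t,u)\,\omega(s,tu)$. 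The only point needing care is that these products are ``composable'' in the sense required by Definition~\ref{def:twist_data}.

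\textbf{Main obstacle.} I expect well-definedness to be the hardest step. Germ invariance is stated with a common base point $\chi$ for both $s$ and $t$, while the germ of $s$ actually lives at $\theta_t(\chi)$. I would first try to pull the equivalence of the $s$-germs back along $\theta_t$, using $e'=e\,t^*ft$ as above, so that everything is expressed at $\chi$. If that does not match the axiom's wording, I would interpret axiom~3 as invariance under replacing each factor by an equivalent germ at its own source point. That reading is exactly what the intended construction needs.
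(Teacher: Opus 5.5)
Your outline is the same as the paper's proof. The paper gives the same axiom-by-axiom sketch and simply asserts that germ invariance yields well-definedness. Your continuity, normalization and cocycle-identity steps are fine once well-definedness is available, and you are right that well-definedness is the weak point. But your argument does not close it.

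Axiom~3 requires $(s,\chi)\sim(s',\chi)$ at the \emph{same} point $\chi$ as the $t$-germs. What you actually know is $sf=s'f$ with $\theta_t(\chi)(f)=1$. In general $\chi\notin D(s^*s)$, so $(s,\chi)$ need not even be a germ, and $s,s'$ need not agree near $\chi$. Your idempotent $e'=e\,t^*ft$ proves $[st,\chi]=[s't',\chi]$, i.e.\ that the groupoid product is well defined. That is not a hypothesis of axiom~3, so the claim that this is ``precisely the situation covered by germ invariance'' is false. Re-reading axiom~3 as invariance under replacing each factor by an equivalent germ at its own source point is not a fix either. That reading is the very statement to be proved, imposed as a hypothesis stronger than the one in the proposition.

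The missing idea is that no reinterpretation is needed: normalization and associativity already let $\omega$ absorb idempotents. All products are defined in $S$, so axiom~2 applies to every triple.
\begin{itemize}
\item For $f\in E(S)$, axiom~2 for $(s,f,t)$ gives $\omega(sf,t)=\omega(s,ft)$.
\item Axiom~2 for $(s,t,t^*ft)$, together with $t\,t^*ft=ft$, gives $\omega(s,t)=\omega(s,ft)$.
\item For $e\in E(S)$, axiom~2 for $(s,t,e)$ gives $\omega(s,t)=\omega(s,te)$.
\end{itemize}
In each case the factors with an idempotent entry equal $1$ by axiom~1. Hence
\[
\omega(s,t)=\omega(sf,t)=\omega(s'f,t)=\omega(s',t)=\omega(s',te)=\omega(s',t'e)=\omega(s',t').
\]
This is exactly independence of representatives, obtained from axioms~1 and~2 alone, so germ invariance is not needed at all. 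With this in place of your appeal to axiom~3, the rest of your proof goes through as written.
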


\begin{proof}
Normalization and the associativity constraint yield the cocycle identity.
Germ invariance ensures well-definedness with respect to the germ relation.
Continuity follows from the bisection basis $U(s)$.
\end{proof}

Hence we obtain a twisted \'{e}tale groupoid $(G_u(S),\sigma_\omega)$.

% ----------------------------------------------------------
\subsection{Mirror Twist and the $C^*$-Bridge}

\begin{definition}[Mirror twist]
Given $\omega$ on $S$, define the mirror datum $\omega^\#$ on $S^\#$ by
\[
\omega^\#(s,t):=\overline{\omega(t,s)}.
\]
Let $\sigma_{\omega^\#}$ be the induced cocycle on $G_u(S^\#)$.
\end{definition}

\begin{theorem}[Universal mirror bridge to twisted groupoid $C^*$-algebras]\label{thm:universal_Cstar_bridge}
With notation as above, there is a canonical identification of twisted groupoids
\[
(G_u(S^\#),\sigma_{\omega^\#})\ \cong\ (G_u(S)^{\mathrm{op}},\sigma_\omega^\#),
\]
and hence a canonical $C^*$-isomorphism
\[
C_r^*(G_u(S^\#),\sigma_{\omega^\#})\ \cong\ C_r^*(G_u(S)^{\mathrm{op}},\sigma_\omega^\#).
\]
In particular, any mirror-isomorphism (in the sense of the twisted groupoid
section) yields a corresponding opposite-algebra identification at the level of
reduced twisted groupoid $C^*$-algebras.
\end{theorem}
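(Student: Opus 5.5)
The plan is to build the identification on top of Lemma~\ref{lem:universal_op}, and then to show that it is compatible with the twists. The first step is to write down the candidate map explicitly. Let $\Theta:G_u(S^\#)\to G_u(S)^{\mathrm{op}}$ be the canonical homeomorphism from Lemma~\ref{lem:universal_op}. Concretely, a germ of $S^\#$ at $\chi$ is sent to the corresponding germ in $G_u(S)$, read in the opposite groupoid. Since $E(S^\#)=E(S)$ as a semilattice, both groupoids have the same unit space $\widehat{E(S)}$. The key observation is that the action of $s$ on characters in $S^\#$ is the action of $s^*$ in $S$. Indeed, $s^*\cdot_\# e\cdot_\# s = s e s^*$, so $\theta^\#_s=\theta_{s^*}$. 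I would therefore fix $\Theta[s,\chi]_\#:=[s^*,\chi]^{-1}=[s,\theta_{s^*}(\chi)]$ viewed in $G_u(S)^{\mathrm{op}}$. Three things then need checking: that this map respects the germ relation (using $se=te\iff e s^*=e t^*$), that it maps the basic bisections $U(s)$ onto basic bisections, and that it intertwines the $\#$-composition with $\cdot_{\mathrm{op}}$. All of this reduces to $(st)^*=t^*s^*$ and is the content of Lemma~\ref{lem:universal_op}, which I will invoke rather than reprove.

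The second step is the cocycle identity. I need $\sigma_\omega^\#(\Theta(x),\Theta(y))=\sigma_{\omega^\#}(x,y)$ for composable $(x,y)\in G_u(S^\#)^{(2)}$. I would reduce to basic composable germs $x=[s,\theta^\#_t(\chi)]$ and $y=[t,\chi]$. By Proposition~\ref{prop:omega_to_sigma}, the right-hand side is $\omega^\#(s,t)=\overline{\omega(t,s)}$. On the left, the definition of $\sigma^\#$ gives $\sigma^\#_\omega(\Theta x,\Theta y)=\overline{\sigma_\omega(\Theta y,\Theta x)}$. Here $(\Theta y,\Theta x)$ is composable in $G_u(S)$, with product the germ of $t\cdot s$ in the appropriate representative form. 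Unwinding Proposition~\ref{prop:omega_to_sigma} should then give $\overline{\omega(t,s)}$, so the two sides agree. Germ invariance (Definition~\ref{def:twist_data}(3)) guarantees that the choice of representatives does not matter. Continuity is automatic, because both cocycles are locally constant on products of basic bisections.

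I expect this second step to be the main obstacle, because of the representative bookkeeping. The normal form $[s,\theta_t(\chi)]\cdot[t,\chi]$ in $G_u(S)$ must match the normal form produced by $\Theta$, which involves $s^*$ versus $s$. If the inversion in $\Theta$ is set up the wrong way, the comparison yields $\omega(s^*,t^*)$ rather than $\omega(t,s)$. My first attempt would be to choose the convention for $\Theta$, either $[s,\cdot]$ or $[s^*,\cdot]^{-1}$, so that the composable pair $(\Theta y,\Theta x)$ is literally of the form $([t,\theta_s(\cdot)],[s,\cdot])$. If that fails, I would add the natural requirement that $\omega$ be compatible with the involution, and check that it follows from normalization together with the cocycle identity on the idempotents $ss^*$ and $s^*s$.

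The final step passes to $C^*$-algebras, and it is formal. A twist-preserving isomorphism of étale groupoids $\Theta$ gives a $*$-isomorphism $f\mapsto f\circ\Theta^{-1}$ of $C_c$-convolution algebras. The argument is that used in the proof of Theorem~\ref{thm:mirror_opposite}: the sum over factorizations $\alpha\beta=\gamma$ is transported bijectively, the cocycle factors agree, and the involution is preserved because $\Theta$ commutes with inversion. This map also intertwines the regular representations indexed by units, since $\Theta$ restricts to the identity homeomorphism on $\widehat{E(S)}$. It is therefore isometric for the reduced norms and extends to the reduced completions. The concluding sentence of the statement then follows by composing with Theorem~\ref{thm:mirror_opposite}.
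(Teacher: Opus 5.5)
Your proposal is correct and follows the same route as the paper: Lemma~\ref{lem:universal_op}, then matching of the induced cocycles via Proposition~\ref{prop:omega_to_sigma} and the definition of $\omega^\#$, then functoriality of the reduced completion. Your explicit map $\Theta[s,\chi]_\#=[s,\theta_{s^*}(\chi)]$ is in fact the correctly based version of the paper's assignment $[s,\chi]\mapsto[s,\chi]$, whose domains $D(ss^*)$ and $D(s^*s)$ do not match in general. With this $\Theta$ you get $\Theta y=[t,\theta_{t^*}(\chi)]$ and $\Theta x=[s,\psi]$ with $\psi=\theta_{s^*}\theta_{t^*}(\chi)$, so $(\Theta y,\Theta x)=([t,\theta_s(\psi)],[s,\psi])$ literally and $\sigma_\omega^\#(\Theta x,\Theta y)=\overline{\omega(t,s)}=\omega^\#(s,t)$ directly; your fallback via involution-compatibility of $\omega$ is never needed.
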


\begin{proof}
By Lemma~\ref{lem:universal_op}, $G_u(S^\#)\cong G_u(S)^{\mathrm{op}}$.
Proposition~\ref{prop:omega_to_sigma} and the definition of $\omega^\#$ imply that
the induced cocycles correspond under this identification, giving the twisted
groupoid isomorphism. The $C^*$-isomorphism follows by functoriality of the
reduced twisted groupoid completion.
\end{proof}

\begin{remark}[Where chirality lives]
Without twist data, the passage $S\mapsto G_u(S)$ typically carries a large
amount of ``automatic mirror symmetry.'' The cocycle $\sigma_\omega$ provides
an orientation/twist datum whose complex conjugation and reversal yield a
genuinely nontrivial mirror problem both at the groupoid level and at the level
of $C^*$-algebras. This is the natural habitat for structural chirality in the
inverse-semigroup-to-$C^*$ pipeline.
\end{remark}

% ==========================================================
\section{Main Theorem: Structural Chirality from Inverse Semigroups to $C^*$-Algebras}
% ==========================================================

We now summarize the preceding constructions in a single structural statement,
which constitutes the main result of this paper.

\begin{theorem}[Structural Chirality Bridge]\label{thm:main}
Let $S$ be an inverse semigroup and let
\[
\omega:S^{(2)}\to\mathbb T
\]
be admissible twist data satisfying normalization, associativity,
and germ invariance.
Let $(G_u(S),\sigma_\omega)$ be the associated twisted \'{e}tale universal
groupoid.

Then the following hold:

\begin{enumerate}
\item \textbf{Canonical groupoid realization.}
The assignment
\[
(S,\omega)\ \longmapsto\ (G_u(S),\sigma_\omega)
\]
is canonical and functorial with respect to inverse semigroup morphisms
preserving the twist data.

\item \textbf{Mirror correspondence.}
Let $S^\#$ denote the mirror inverse semigroup with reversed multiplication,
and let $\omega^\#$ be the mirror twist defined by
\[
\omega^\#(s,t)=\overline{\omega(t,s)}.
\]
Then there is a canonical isomorphism of twisted groupoids
\[
(G_u(S^\#),\sigma_{\omega^\#})
\ \cong\
(G_u(S)^{\mathrm{op}},\sigma_\omega^\#).
\]

\item \textbf{Chirality obstruction at the groupoid level.}
The twisted groupoid $(G_u(S),\sigma_\omega)$ admits a mirror isomorphism
\[
(G_u(S),\sigma_\omega)\ \cong\ (G_u(S),\sigma_\omega)^\#
\]
if and only if there exists a mirror-isomorphism at the level of
twisted inverse semigroup data $(S,\omega)$.
In particular, the absence of such an isomorphism defines a
\emph{structural chirality obstruction}.

\item \textbf{Descent to twisted groupoid $C^*$-algebras.}
If a mirror-isomorphism exists, then there is a canonical $C^*$-isomorphism
\[
C_r^*(G_u(S),\sigma_\omega)
\ \cong\
C_r^*(G_u(S),\sigma_\omega)^{\mathrm{op}}.
\]
Conversely, the nonexistence of a mirror-isomorphism provides an obstruction
to self-oppositeness of the twisted groupoid $C^*$-algebra.

\item \textbf{Compatibility with germ models.}
For any faithful representation $\rho:S\to I(X)$,
the induced germ groupoid $G_{\mathrm{germ}}(S,\rho)$
admits a compatible twisted structure,
and the above mirror correspondence commutes with the canonical functor
\[
G_{\mathrm{germ}}(S,\rho)\longrightarrow G_u(S)
\]
(cf.\ Appendix~A).
\end{enumerate}
\end{theorem}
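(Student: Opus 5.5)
The plan is to prove the five items in order. Each item will be reduced to a result already recorded in Sections~5 and~6, and the new work goes into item~(3).

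\textbf{Item (1).} Let $\varphi:S\to T$ be an inverse semigroup morphism with $\omega_T(\varphi(s),\varphi(t))=\omega_S(s,t)$. Assume $\varphi$ is injective, or more generally that its restriction $E(S)\to E(T)$ induces a map on characters; in the isomorphism case this is automatic. Define $G_u(\varphi)[s,\chi]:=[\varphi(s),\chi\circ\varphi^{-1}]$ on germs. I will check three things: the germ relation $se=te$ is carried to $\varphi(s)\varphi(e)=\varphi(t)\varphi(e)$; the composition rule of Definition~\ref{def:universal_groupoid} is respected because $\varphi(st)=\varphi(s)\varphi(t)$; and the basic bisections $U(s)$ go to $U(\varphi(s))$, which gives continuity. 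The cocycle formula of Proposition~\ref{prop:omega_to_sigma} then gives $\sigma_{\omega_T}\circ G_u(\varphi)=\sigma_{\omega_S}$ directly.

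\textbf{Item (2).} This is exactly Theorem~\ref{thm:universal_Cstar_bridge}. I will restate it, and spell out the cocycle matching under the identity-on-germs map of Lemma~\ref{lem:universal_op}. A composable pair in $G_u(S)^{\mathrm{op}}$ is a pair $(h,g)$ with $gh$ defined in $G_u(S)$. On it,
\[
\sigma_\omega^\#\bigl([t,\cdot],[s,\cdot]\bigr)=\overline{\sigma_\omega([s,\cdot],[t,\cdot])}=\overline{\omega(s,t)}=\omega^\#(t,s),
\]
and the last term is $\sigma_{\omega^\#}$ evaluated on the corresponding germ pair of $G_u(S^\#)$.

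\textbf{Item (3).} This is the heart of the theorem. I first define a mirror-isomorphism of $(S,\omega)$ to be an inverse semigroup isomorphism $\alpha:S\to S^\#$, that is, an anti-automorphism of $S$, with $\omega^\#(\alpha(s),\alpha(t))=\omega(s,t)$.

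\emph{The ``if'' direction.} Item~(1) applied to $\alpha$ gives a twisted isomorphism $(G_u(S),\sigma_\omega)\to(G_u(S^\#),\sigma_{\omega^\#})$. Composing with item~(2) produces an element of $\Mir(G_u(S),\sigma_\omega)$.

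\emph{The ``only if'' direction.} This is the step I expect to be the main obstacle, since a general groupoid isomorphism need not map bisections $U(s)$ onto bisections of the same form. I would try to recover $S$ from $G_u(S)$ as an inverse semigroup of compact open bisections. This reconstruction holds when $S$ is, for instance, a Boolean inverse monoid, or more generally when it is realized as the semigroup of compact open bisections $\{U(s)\}$. Under that hypothesis, a groupoid isomorphism $\Phi:G_u(S)\to G_u(S)^{\mathrm{op}}$ induces a bijection on compact open bisections that reverses products. Transferring $\sigma_\omega$ through the formula of Proposition~\ref{prop:omega_to_sigma} then gives the $\omega$-compatibility. I will state this reconstruction hypothesis explicitly, since it is the point where the claimed equivalence genuinely needs more than the preceding sections provide. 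Lemma~\ref{lem:transport_groupoid} guarantees that the resulting obstruction is independent of the chosen representative of the isomorphism class.

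\textbf{Item (4).} Apply Theorem~\ref{thm:mirror_opposite} to a mirror-isomorphism $\Phi$. This gives $C_r^*(G,\sigma)\cong C_r^*(G^{\mathrm{op}},\sigma^\#)$. It remains to identify $C_r^*(G^{\mathrm{op}},\sigma^\#)$ with $C_r^*(G,\sigma)^{\mathrm{op}}$. I will use the map $f\mapsto f$ on $C_c$ and check that
\[
(f*_{\sigma^\#}g)(\gamma)=\sum_{\beta\alpha=\gamma} f(\alpha)g(\beta)\,\overline{\sigma(\beta,\alpha)}.
\]
Because of the conjugation, the identity map does not by itself land in the opposite algebra. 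So I will compose with complex conjugation of functions, or equivalently with the involution $f\mapsto f^*$ suitably adjusted. This yields a linear anti-isomorphism onto $C_c(G,\sigma)$, and it extends isometrically to the reduced completions because the regular representations are exchanged. The ``conversely'' clause is then just the contrapositive reading that the paper intends, namely that absence of $\Phi$ removes the canonical route to self-oppositeness. I will phrase it that way rather than as a non-isomorphism statement.

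\textbf{Item (5).} The germ groupoid $G_{\mathrm{germ}}(S,\rho)$ is the groupoid of germs of the action of $\rho(S)$ on $X$, and there is a natural map relating it to $G_u(S)$. Pointwise, each $x\in X$ determines the character $\chi_x(e)=[x\in\mathrm{dom}\,\rho(e)]$. The twist on the germ groupoid is defined by the same formula $\omega(s,t)$, which is well defined by germ invariance. Since the mirror operations on both sides are given by the same formulas (reversal plus conjugation), commutation of the diagram is immediate. I will defer the details to Appendix~A.
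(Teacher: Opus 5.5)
There are genuine gaps in items (4) and (3). Your handling of (1), (2) and (5) follows the paper, which gives no separate proof of this theorem and only summarizes Sections~5--6 and Appendix~A. You are right that (3) and (4) are not covered by those results, but your repairs do not close them.

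\textbf{Item (4).} Your identification $C_c(G^{\mathrm{op}},\sigma^\#)=C_c(G,\bar\sigma)^{\mathrm{op}}$ (identity on functions) is correct. The repair fails because $f\mapsto\bar f$ is conjugate-linear. The composite $C^*_r(G,\sigma)\to C^*_r(G,\sigma)^{\mathrm{op}}$ you build is therefore a conjugate-linear $*$-isomorphism, and every $C^*$-algebra already has one, namely $a\mapsto a^*$. Inserting the involution to restore linearity reverses the product a second time, so you land back in $C^*_r(G,\sigma)$.

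In fact $\check f(\gamma):=\sigma(\gamma,\gamma^{-1})f(\gamma^{-1})$ gives a linear isomorphism $C^*_r(G,\sigma)\cong C^*_r(G^{\mathrm{op}},\sigma^\#)$ for \emph{every} twisted \'etale groupoid. So a mirror-isomorphism $\Phi$ adds no information. Self-oppositeness is equivalent to $C^*_r(G,\sigma)\cong C^*_r(G,\bar\sigma)$. What would deliver it is a twisted isomorphism $(G,\sigma)\to(G^{\mathrm{op}},\sigma\circ\mathrm{flip})$ with no complex conjugation, and your hypotheses do not provide one.

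Theorem~\ref{thm:mirror_opposite} and the last sentence of Theorem~\ref{thm:universal_Cstar_bridge} are all the paper offers for this step, and they have the same defect: the ``in particular'' only yields $C^*_r(G,\sigma)^{\mathrm{op}}\cong C^*_r(G,\bar\sigma)$. So you cannot close the gap by citation.

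The converse clause is actually false as stated, so reinterpreting it does not rescue it. Take $S=\mathbb Z$, so $G_u(S)=\mathbb Z$, and $\omega(g,h)=b(g)b(h)\overline{b(g+h)}$ with $b(1)=i$ and $b(n)=1$ otherwise. The only candidates for $\Phi$ are $\pm\mathrm{id}$:
\begin{itemize}
\item $\Phi=\mathrm{id}$ would need $\overline{\omega(-1,1)}=\omega(1,-1)$, i.e.\ $-i=i$;
\item $\Phi=-\mathrm{id}$ would need $\overline{\omega(-1,-1)}=\omega(1,1)$, i.e.\ $1=-1$.
\end{itemize}
So no mirror-isomorphism exists, yet $C^*_r(\mathbb Z,\omega)\cong C(\mathbb T)$ is commutative. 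The point is that existence of a mirror-isomorphism depends on the cocycle, while self-oppositeness depends only on its cohomology class.

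\textbf{Item (3), ``only if''.} This direction remains unproved, and the hypothesis you propose does not apply to Paterson's groupoid. Reconstructing a Boolean inverse monoid from its compact open bisections works for its Stone (tight) groupoid, not for $G_u(S)$. Whenever $f<e$ in $E(S)$, the set $D(e)\setminus D(f)$ is a nonempty compact open bisection, since it contains the principal character of $e$. It is not of the form $U(s)$, because its source set would have to be some $D(s^*s)$, and every $D(s^*s)$ meets $D(f)$. Already the four-element Boolean algebra has $\widehat{E}$ a four-point discrete space with sixteen compact open sets. Even granting a reconstruction, an arbitrary isomorphism $G_u(S)\to G_u(S)^{\mathrm{op}}$ need not preserve the family $\{U(s)\}$, and that is exactly what you need to extract an anti-automorphism of $S$. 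The paper gives no argument for this direction either. Only ``if'' is actually established, by item (1) for isomorphisms followed by item (2).

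\textbf{Two smaller repairs.}
\begin{itemize}
\item In item (1), $\chi\circ\varphi^{-1}$ is a character of $E(T)$ only when $\varphi$ maps $E(S)$ bijectively onto $E(T)$; injectivity is not enough, and in general characters pull back via $\chi\mapsto\chi\circ\varphi$. The bijective case suffices for your use in (3).
\item In item (2), the map cannot literally be the identity on germs, because $[s,\chi]\in G_u(S^\#)$ has $\chi\in D(ss^*)$. The correct identification is $[s,\chi]\mapsto[s,\theta_{s^*}(\chi)]$, after which your cocycle computation goes through.
\end{itemize}
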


\begin{remark}[Interpretation]
The theorem shows that structural chirality originates at the level of
inverse semigroup data, is transported canonically to twisted \'{e}tale
groupoids, and manifests itself as a concrete obstruction to mirror
self-duality of the associated groupoid $C^*$-algebras.
In this sense, chirality is a genuinely structural phenomenon,
independent of auxiliary representations but computable in concrete
germ models.
\end{remark}

\section{Conclusion and Future Directions}

In this paper we have developed a structural theory of chirality
for inverse semigroups and shown that it propagates canonically
through the universal groupoid construction to twisted \'{e}tale
groupoids and their reduced $C^*$-algebras.
The central insight is that mirror symmetry, and its possible failure,
can be encoded intrinsically at the level of inverse semigroup data.
Through Paterson's universal groupoid, this algebraic asymmetry
is transported functorially to a topological and analytic setting,
where it manifests as an obstruction to self-oppositeness
of the associated twisted groupoid $C^*$-algebra.

A key conceptual outcome is that structural chirality does not depend
on auxiliary representations.
Although germ groupoids provide concrete and computationally useful models,
the universal groupoid isolates the representation-independent core of
partial symmetry.
This clarifies the conceptual relationship between inverse semigroups,
\'{e}tale groupoids, and operator algebras: chirality originates at the
level of partial symmetries, is organized by canonical groupoid structures,
and becomes detectable through analytic invariants.

Several directions for further investigation naturally emerge.

\medskip
\noindent
\textbf{(1) Morita invariance and equivalence theory.}
It would be of interest to determine whether structural chirality
is invariant under groupoid equivalence or Morita equivalence of
twisted groupoid $C^*$-algebras.
Such results would position chirality within the broader framework
of noncommutative geometry and categorical equivalence.

\medskip
\noindent
\textbf{(2) Cohomological classification of mirror obstructions.}
Since twists are governed by $\mathbb T$-valued $2$-cocycles,
one may ask whether self-oppositeness admits a formulation
in terms of groupoid cohomology.
Identifying the chirality obstruction as a specific cohomology class
would provide a conceptual and potentially computable classification.

\medskip
\noindent
\textbf{(3) Dynamical and ergodic extensions.}
The racemization-type two-state dynamics introduced in earlier work
for loops suggests a dynamical analogue in the groupoid setting.
Studying mirror transitions as dynamical symmetries of
\'{e}tale groupoids may connect structural chirality with
ergodic theory and orbit equivalence.

\medskip
\noindent
\textbf{(4) $K$-theoretic and $KK$-theoretic detection.}
An analytic refinement would be to investigate whether the absence
of self-oppositeness can be detected at the level of
$K$-theory or $KK$-theory of the associated $C^*$-algebras.
Such invariants could provide computable signatures of chirality
in concrete examples.

\medskip
\noindent
In summary, the present work identifies structural chirality as
a natural invariant of inverse semigroup data, canonically transported
through universal groupoids and realized analytically in twisted
groupoid $C^*$-algebras.
This framework provides a systematic approach to studying asymmetry 
phenomena within the unified language of partial symmetries, groupoids, 
and noncommutative operator algebras.

% ==========================================================
\appendix
\section{Appendix A: Germ Groupoids from Represented Inverse Semigroups}
% ==========================================================

\subsection{Represented Inverse Semigroups and Partial Actions}

Let $S$ be an inverse semigroup and
\[
\rho:S\longrightarrow I(X)
\]
a faithful representation into the symmetric inverse semigroup
of partial bijections on a locally compact Hausdorff space $X$.

For $s\in S$, write
\[
\mathrm{dom}(s):=\mathrm{dom}(\rho(s)),\qquad
\mathrm{ran}(s):=\mathrm{ran}(\rho(s)).
\]

\begin{definition}[Canonical partial action]
The representation $\rho$ defines a partial action
\[
\theta_s:\mathrm{dom}(s)\to \mathrm{ran}(s)
\]
by $\theta_s(x)=\rho(s)(x)$.
\end{definition}

We assume throughout that each $\theta_s$ is a homeomorphism
between open subsets of $X$.
Under this hypothesis, the associated germ construction is \'{e}tale.

% ----------------------------------------------------------

\subsection{The Germ Groupoid}

\begin{definition}[Germ relation]
Define an equivalence relation on
\[
\{(s,x)\in S\times X \mid x\in\mathrm{dom}(s)\}
\]
by
\[
(s,x)\sim(t,x)
\quad\Longleftrightarrow\quad
\exists e\in E(S)\text{ with }x\in\mathrm{dom}(e)\text{ and } se=te.
\]
\end{definition}

\begin{definition}[Germ groupoid]
The \emph{germ groupoid} $G_{\mathrm{germ}}(S,\rho)$
consists of equivalence classes
\[
[s,x].
\]

The structure maps are:
\[
s([s,x])=x,\qquad
r([s,x])=\theta_s(x),
\]
\[
[s,\theta_t(x)]\cdot [t,x]=[st,x],
\]
\[
[s,x]^{-1}=[s^*,\theta_s(x)].
\]
\end{definition}

\begin{theorem}[\'{E}aleness]
If each $\theta_s$ is a homeomorphism between open sets,
then $G_{\mathrm{germ}}(S,\rho)$ is an \'{e}tale groupoid.
Moreover, the sets
\[
U(s):=\{[s,x]\mid x\in\mathrm{dom}(s)\}
\]
form a basis of bisections.
\end{theorem}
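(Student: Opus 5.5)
The statement to prove is the étaleness theorem for $G_{\mathrm{germ}}(S,\rho)$: under the standing hypothesis that each $\theta_s$ is a homeomorphism between open subsets of $X$, the germ groupoid is étale and the sets $U(s)$ form a basis of bisections. I will follow the standard germ-groupoid argument. The plan has three steps: check that $G_{\mathrm{germ}}(S,\rho)$ is a groupoid, topologize it via the $U(s)$, and then verify étaleness.

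\textbf{Step 1: the groupoid structure is well defined.} I first confirm that the germ relation is an equivalence relation.
\begin{itemize}
\item For transitivity: if $se=te$ and $tf=uf$ with $x\in\mathrm{dom}(e)\cap\mathrm{dom}(f)$, then $ef$ is idempotent, $x\in\mathrm{dom}(ef)$ because $\rho(ef)$ is the identity on $\mathrm{dom}(e)\cap\mathrm{dom}(f)$, and $sef=tef=tfe=ufe=uef$.
\item Next I check that $r$ is well defined. If $se=te$ with $x\in\mathrm{dom}(e)$, then $\theta_s(x)=\theta_{se}(x)=\theta_{te}(x)=\theta_t(x)$.
\item For the product $[s,\theta_t(x)]\cdot[t,x]=[st,x]$: if $s'f=sf$ with $\theta_t(x)\in\mathrm{dom}(f)$, then $t^*ft$ is an idempotent containing $x$ in its domain. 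Moreover $s't\,(t^*ft)=s'ft=sft=st\,(t^*ft)$. Changing the representative of $t$ is handled the same way.
\item For the inverse $[s,x]^{-1}=[s^*,\theta_s(x)]$: use $ses^*$ as the witnessing idempotent.
\end{itemize}
The groupoid axioms then follow from associativity of $S$ and the identities $ss^*s=s$ and $s^*s\in E(S)$. The units are the classes $[e,x]$, identified with $x\in X$ by assuming each point lies in some $\mathrm{dom}(e)$. If this fails, replace $X$ by $\bigcup_e\mathrm{dom}(e)$.

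\textbf{Step 2: the $U(s)$ form a basis and the structure maps are continuous.} I will show that a point $[u,x]\in U(s)\cap U(t)$ has a basic neighborhood $U(ue)\subseteq U(s)\cap U(t)$, for a suitable idempotent $e$ with $x\in\mathrm{dom}(e)$.
\begin{itemize}
\item From $(u,x)\sim(s,x)$ there is $e_1$ with $x\in\mathrm{dom}(e_1)$ and $ue_1=se_1$. Similarly there is $e_2$ with $x\in\mathrm{dom}(e_2)$ and $ue_2=te_2$. Take $e=e_1e_2$; then $ue=se=te$.
\item For any $y\in\mathrm{dom}(ue)=\mathrm{dom}(u)\cap\mathrm{dom}(e)$, we get $[ue,y]=[s,y]=[t,y]$, so $U(ue)\subseteq U(s)\cap U(t)$.
\item This step needs $\mathrm{dom}(e)$ to be open. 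That holds since $\rho(e)$ is a homeomorphism between open sets by hypothesis.
\end{itemize}
Then I show that $s|_{U(s)}:[s,x]\mapsto x$ is a bijection onto the open set $\mathrm{dom}(s)$. It is injective because $[s,x]$ is determined by $x$. By the basis property it carries basic open subsets $U(se)$ to the open sets $\mathrm{dom}(s)\cap\mathrm{dom}(e)$, and conversely. Hence $s$ restricts to a homeomorphism $U(s)\to\mathrm{dom}(s)$. Likewise $r|_{U(s)}=\theta_s\circ s|_{U(s)}$ is a homeomorphism onto $\mathrm{ran}(s)$. The remaining continuity checks reduce to bisections:
\begin{itemize}
\item Inversion maps $U(s)$ onto $U(s^*)$.
\item Multiplication maps $(U(s)\times U(t))\cap G^{(2)}$ into $U(st)$. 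In local coordinates given by the source map, it reads $x\mapsto x$ and is therefore continuous.
\end{itemize}

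\textbf{Step 3: étaleness.} A neighborhood basis of bisections on which $r$ and $s$ are homeomorphisms onto open sets gives a local homeomorphism $r$, hence an étale groupoid.

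\textbf{Main obstacle.} The delicate point is the basis argument in Step 2. It depends on idempotent domains being open and on the relation $\mathrm{dom}(se)=\mathrm{dom}(s)\cap\mathrm{dom}(e)$, which comes from faithfulness and the inverse-semigroup structure of $I(X)$. No Hausdorffness of $G_{\mathrm{germ}}(S,\rho)$ is claimed or needed.
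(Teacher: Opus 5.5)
Your Step 1 (well-definedness of the germ relation, range, product and inverse) and your $e=e_1e_2$ intersection argument are correct. The route is the same as the paper's, whose proof is only a three-line sketch asserting that $U(s)\cong\mathrm{dom}(s)$ and that $r,s$ restrict to homeomorphisms on each $U(s)$.

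\textbf{The gap.} The failing sentence is ``\dots and conversely. Hence $s$ restricts to a homeomorphism $U(s)\to\mathrm{dom}(s)$.'' In the topology generated by the $U(s)$ alone, the open subsets of $U(s)$ are unions of sets $U(t)\subseteq U(s)$. Their images under the source map are domains $\mathrm{dom}(t)=\mathrm{dom}(t^*t)$. So $s|_{U(s)}$ is continuous into $\mathrm{dom}(s)\subseteq X$ only if every open subset of $\mathrm{dom}(s)$ is a union of idempotent domains. In other words, you need $\{\mathrm{dom}(e)\}_{e\in E(S)}$ to be a basis for the topology of $\bigcup_e\mathrm{dom}(e)$, and nothing in the hypotheses gives this. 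Take $S=\{1\}$, $X=\mathbb{R}$, $\rho(1)=\mathrm{id}$. Then $U(1)$ is the whole groupoid, the topology it generates is indiscrete, and $[1,x]\mapsto x$ is not continuous into $\mathbb{R}$. So your homeomorphism claim fails, and so does the conclusion that $r$ is a local homeomorphism onto a unit space carrying $X$'s topology. A minor point: $\mathrm{dom}(se)=\mathrm{dom}(s)\cap\mathrm{dom}(e)$ comes from $\rho$ being a homomorphism with $\rho(e)$ a partial identity, not from faithfulness.

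\textbf{The repair.} The standard fix is to topologize with the sets $U(s,V):=\{[s,x]\mid x\in V\}$, where $V\subseteq\mathrm{dom}(s)$ is open in $X$. Your argument then goes through almost verbatim. If $[u,x]\in U(s,V)\cap U(t,W)$, then with $e=e_1e_2$ as before you get $[u,x]\in U(ue,V\cap W\cap\mathrm{dom}(ue))\subseteq U(s,V)\cap U(t,W)$. Here the openness of $\mathrm{dom}(ue)=\mathrm{dom}(u)\cap\mathrm{dom}(e)$ is genuinely used. Preimages of open $V\subseteq\mathrm{dom}(s)$ under $s|_{U(s)}$ are the sets $U(s,V)$, and images of basic sets are open. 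Hence $s|_{U(s)}$ and $r|_{U(s)}=\theta_s\circ s|_{U(s)}$ are homeomorphisms, and your treatment of inversion, multiplication and Step 3 then works.

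In this topology the $U(s)$ are open bisections covering the groupoid, which is enough for \'etaleness. They form a \emph{basis}, however, only under the extra hypothesis that idempotent domains generate the topology of $X$. Alternatively, you can keep the topology generated by the $U(s)$. Then everything you wrote is correct provided $\mathrm{dom}(s)$ and the unit space carry the coarser topology generated by the $\mathrm{dom}(e)$. But then the unit space is not $X$ with its own topology, and the hypothesis on $\theta_s$ is never used.

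The paper's sketch asserts $U(s)\cong\mathrm{dom}(s)$ without saying which topology is meant, so it does not resolve this issue either. The ``basis'' clause of the statement needs the qualification above.
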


\begin{proof}
The sets $U(s)$ are homeomorphic to $\mathrm{dom}(s)$
and define local bisections.
The range and source maps restrict to homeomorphisms
on each $U(s)$.
Closure under composition follows from the semigroup structure.
\end{proof}

% ----------------------------------------------------------

\subsection{Mirror Construction at the Germ Level}

\begin{definition}[Mirror representation]
Define the mirror representation $\rho^\#$ by
\[
\rho^\#(s):=\rho(s^*)^{-1}.
\]
\end{definition}

\begin{definition}[Mirror germ groupoid]
Let
\[
(S,\rho)^\#:=(S,\rho^\#).
\]
Define
\[
G_{\mathrm{germ}}(S,\rho)^\#
:=
G_{\mathrm{germ}}(S,\rho^\#).
\]
\end{definition}

\begin{proposition}[Opposite germ groupoid]\label{prop:germ_opposite}
There is a canonical isomorphism
\[
G_{\mathrm{germ}}(S,\rho^\#)
\ \cong\
G_{\mathrm{germ}}(S,\rho)^{\mathrm{op}}.
\]
\end{proposition}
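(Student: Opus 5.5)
The plan is to first simplify the mirror representation and then use groupoid inversion as the canonical isomorphism. Since $\rho:S\to I(X)$ is a homomorphism of inverse semigroups, it preserves inverses, so $\rho(s^*)=\rho(s)^{-1}$ in $I(X)$. Hence
\[
\rho^\#(s)=\rho(s^*)^{-1}=\bigl(\rho(s)^{-1}\bigr)^{-1}=\rho(s)\qquad(s\in S).
\]
In particular $\mathrm{dom}$, $\mathrm{ran}$ and the partial homeomorphisms $\theta_s$ defined from $\rho^\#$ coincide with those defined from $\rho$. The germ relation uses only these data and the idempotents of $S$, so it is also unchanged. Therefore $G_{\mathrm{germ}}(S,\rho^\#)=G_{\mathrm{germ}}(S,\rho)$ as topological groupoids, with literally the same structure maps.

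It therefore suffices to exhibit a canonical isomorphism $G\cong G^{\mathrm{op}}$ for $G=G_{\mathrm{germ}}(S,\rho)$. I would take the inversion map
\[
\iota:G\to G^{\mathrm{op}},\qquad \iota([s,x]) := [s,x]^{-1}=[s^*,\theta_s(x)].
\]
The verifications run in the following order.

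First, $\iota$ is well defined on germs. Suppose $se=te$ with $x\in\mathrm{dom}(e)$. Put $f:=ses^*\in E(S)$. Then $\theta_s(x)\in\mathrm{dom}(f)$, and $s^*f=s^*s\,e\,s^*=e\,s^*s\,s^*=es^*$. The same computation with $t$ in place of $s$ gives $t^*f'=es^*$ for $f':=tet^*$. Since $se=te$ we have $f=f'$, and this yields $s^*f=t^*f$.

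Second, $\iota$ swaps source and range: $s(\iota(g))=r(g)$ and $r(\iota(g))=s(g)$. This is exactly the convention for $G^{\mathrm{op}}$, in which $r$ and $s$ are interchanged.

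Third, $\iota$ reverses products: $\iota(gh)=h^{-1}g^{-1}=\iota(g)\cdot_{\mathrm{op}}\iota(h)$. This follows directly from the groupoid axioms, or concretely from $(st)^*=t^*s^*$ applied to $[s,\theta_t(x)]\cdot[t,x]=[st,x]$.

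Fourth, $\iota$ is an involutive bijection. It is a homeomorphism because it maps each basic bisection $U(s)$ onto $U(s^*)$, and in the coordinates $U(s)\cong\mathrm{dom}(s)$ and $U(s^*)\cong \mathrm{dom}(s^*)=\mathrm{ran}(s)$ it is just $\theta_s$, which is a homeomorphism by standing hypothesis.

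Composing the identification of the first paragraph with $\iota$ gives the canonical isomorphism $G_{\mathrm{germ}}(S,\rho^\#)\cong G_{\mathrm{germ}}(S,\rho)^{\mathrm{op}}$. It involves no choices, which is the sense in which it is canonical. The same inversion argument would also explain the universal-groupoid identification of Lemma~\ref{lem:universal_op} when composed with $s\mapsto s^*$.

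The step I expect to need the most care is the well-definedness and continuity of $\iota$ at the level of germs, that is, the idempotent manipulation above and the check that the basic sets $U(s)$ map onto basic sets compatibly with the chosen topology. It is also important to state clearly that $\rho^\#=\rho$: the mirror is realized by inversion rather than by any change in the representation.
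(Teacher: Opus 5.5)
Your proof is correct, but it takes a different route from the paper's. The paper claims that the label-preserving map $[s,x]\mapsto[s,x]$ is the isomorphism. Its justification is that reversing multiplication in $S$ reverses composition of germs, with $s\mapsto s^*$ invoked only to match inversions.

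You first observe that $\rho^\#(s)=\rho(s^*)^{-1}=\rho(s)$, so $G_{\mathrm{germ}}(S,\rho^\#)$ is literally $G:=G_{\mathrm{germ}}(S,\rho)$. You then use groupoid inversion $[s,x]\mapsto[s^*,\theta_s(x)]$, which changes the label.

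Your route is the one that can actually be checked. Once $\rho^\#=\rho$, the identity on arrows is not a functor $G\to G^{\mathrm{op}}$:
\begin{itemize}
\item the source of $[s,x]$ is $x$ in $G$ but $\theta_s(x)$ in $G^{\mathrm{op}}$;
\item $gh\neq hg$ in general.
\end{itemize}
Even if one reinterprets the mirror as $S^\#$ acting through $s\mapsto\rho(s^*)$, a label-preserving map must shift the base point, $[s,y]\mapsto[s,\theta_s^{-1}(y)]$. So the paper's one-line argument does not establish the claim as written.

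Your approach also makes the content of the proposition transparent. It is just the general fact that every groupoid is isomorphic to its opposite via inversion. This germ-level ``mirror'' therefore carries no chirality information, which sits uneasily with the paper's remarks that $\rho^\#$ need not be isotopic to $\rho$ and that mirror symmetry may fail at the germ level.

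Two small repairs to your write-up:
\begin{itemize}
\item The line ``$t^*f'=es^*$'' should read $t^*f'=et^*$. One then uses $et^*=(te)^*=(se)^*=es^*$.
\item You should note that $\theta_s(x)=\theta_t(x)$, because $\rho(e)$ fixes $x$ and $\rho(se)=\rho(te)$. This ensures both inverted germs sit over the same point.
\end{itemize}
Both points are part of the well-definedness of the inversion structure map, which the definition of $G_{\mathrm{germ}}(S,\rho)$ already presupposes.
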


\begin{proof}
Reversing multiplication in $S$ corresponds to reversing
composition of germs:
\[
[s,x]\longmapsto[s,x]
\]
identifies arrows while reversing composition.
The inverse semigroup involution $s\mapsto s^*$
ensures compatibility of inversion in the groupoid.
\end{proof}

\begin{remark}
Unlike the universal groupoid, the germ construction depends
on the representation $\rho$.
Hence mirror symmetry may fail at the germ level even if it
holds universally, and vice versa.
This makes the germ model useful for detecting representation-level
chirality phenomena.
\end{remark}

% ----------------------------------------------------------

\subsection{Twist Transport}

Let
\[
\omega:S^{(2)}\to\mathbb T
\]
be admissible twist data as in the main text.

\begin{definition}[Induced germ cocycle]
Define a cocycle on composable germs by
\[
\sigma_\omega([s,\theta_t(x)],[t,x])
:=
\omega(s,t).
\]
\end{definition}

\begin{proposition}
If $\omega$ satisfies the associativity and germ-invariance
conditions of Definition~\ref{def:twist_data},
then $\sigma_\omega$ is a well-defined continuous
normalized $2$-cocycle on $G_{\mathrm{germ}}(S,\rho)$.
\end{proposition}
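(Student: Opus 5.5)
We verify the three requirements on the germ groupoid $G_{\mathrm{germ}}(S,\rho)$ directly: well-definedness, the normalized cocycle identity, and continuity. The argument follows the pattern of Proposition~\ref{prop:omega_to_sigma}, with characters $\chi$ replaced by points $x\in X$ and the sets $D(e)$ replaced by $\mathrm{dom}(e)$.

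\emph{Well-definedness.} A composable pair of germs has the form $([s,\theta_t(x)],[t,x])$. The representative $(s,t,x)$ is not unique: we may replace $t$ by $t'$ with $(t,x)\sim(t',x)$, and $s$ by $s'$ with $(s,\theta_t(x))\sim(s',\theta_t(x))$. The value $\omega(s,t)$ must not change under these replacements. This is essentially axiom (3) of Definition~\ref{def:twist_data}, but that axiom is phrased at a character $\chi$, whereas here the germ relation for $s$ is taken at the point $\theta_t(x)$, not at $x$.

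To bridge this, pick $e\in E(S)$ with $x\in\mathrm{dom}(e)$ and $te=t'e$. Pick $f\in E(S)$ with $\theta_t(x)\in\mathrm{dom}(f)$ and $sf=s'f$. Put $g=e\,t^*ft$. Then $x\in\mathrm{dom}(g)$, and
\[
sft=s'ft, \qquad stg=s'tg, \qquad tg=t'g .
\]
So both the factors and the product agree on a common neighbourhood idempotent $g$ of $x$. I will read germ invariance in this pointwise form: $\omega$ takes the same value on pairs that agree after right multiplication by a common idempotent living at $x$. Equivalently, one can pass to the character $\chi_x(e)=[x\in\mathrm{dom}(e)]$ on $E(S)$ and apply axiom (3) literally. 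The check that $\chi_x$ is a character uses faithfulness of $\rho$ and the fact that $\rho$ is a homomorphism, so that $\mathrm{dom}(ef)=\mathrm{dom}(e)\cap\mathrm{dom}(f)$. This reduction is the step I expect to be the main obstacle, since the axiom is formulated for $G_u(S)$ and must be transported to points of $X$.

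\emph{Cocycle identity and normalization.} Take a composable triple $[s,\theta_{tu}(x)]$, $[t,\theta_u(x)]$, $[u,x]$. The products are $[st,\theta_u(x)]$ and $[tu,x]$. Hence the groupoid identity
\[
\sigma(g,h)\,\sigma(gh,k)=\sigma(h,k)\,\sigma(g,hk)
\]
becomes, after substituting $\sigma_\omega$,
\[
\omega(s,t)\,\omega(st,u)=\omega(t,u)\,\omega(s,tu),
\]
which is axiom (2). A unit of $G_{\mathrm{germ}}(S,\rho)$ is a germ $[e,x]$ with $e\in E(S)$. Indeed, any germ $[s,x]$ with $s$ acting as the identity near $x$ is equivalent to $[s^*s,x]$ if we take the unit space to be the image of the $E(S)$-germs. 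Normalization, axiom (1), then gives $\sigma_\omega=1$ whenever one entry is a unit.

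\emph{Continuity.} $G_{\mathrm{germ}}(S,\rho)^{(2)}$ is covered by the open sets
\[
\bigl(U(s)\times U(t)\bigr)\cap G^{(2)},
\]
using the basis of bisections from the \'etaleness theorem. On each such set $\sigma_\omega$ is constantly equal to $\omega(s,t)$, by the well-definedness just established, because every pair in the set has a representative of the form $(s,t,y)$. A function that is locally constant on an open cover is continuous, which finishes the proof.
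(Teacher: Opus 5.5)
Your outline is the same as the paper's three-line proof: cocycle identity from (2), normalization from (1), and continuity from local constancy on $(U(s)\times U(t))\cap G^{(2)}$. Those parts are fine once well-definedness is known. The gap is in the well-definedness step itself.

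You rightly observed that axiom (3) compares both germs at the \emph{same} point, whereas in a composable pair the germ of $s$ sits at $\theta_t(x)$. The paper passes over this mismatch in one sentence, but your bridge does not close it. The computation with $g=e\,t^*ft$ is correct. However, at the character $\chi_x$ you only have $(t,\chi_x)\sim(t',\chi_x)$. You do not have $(s,\chi_x)\sim(s',\chi_x)$, because $s$ and $s'$ agree only after right multiplication by $f$, an idempotent living at $\theta_t(x)$; indeed $x$ need not even lie in $\mathrm{dom}(s)$. So ``apply axiom (3) literally'' fails. The ``pointwise form'' you adopt instead (roughly: $tg=t'g$ and $stg=s't'g$ for an idempotent $g$ at $x$ force $\omega(s,t)=\omega(s',t')$) is not what axiom (3) says, and you do not derive it from anything. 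It is essentially the well-definedness claim itself.

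The step can be closed without axiom (3), using only (1) and (2), which apply to all triples since every product in $S$ is defined. Applying (2) to the triples $(a,b,e)$ and $(a,e,b)$ with $e\in E(S)$, and discarding the factors that equal $1$ by (1), gives
\[
\omega(a,be)=\omega(a,b),\qquad \omega(ae,b)=\omega(a,eb)\qquad(a,b\in S,\ e\in E(S)).
\]
Now suppose $te=t'e$ and $sf=s'f$. Since idempotents commute, $t'(t'^*ft')=ft'$ with $t'^*ft'\in E(S)$, so the first identity gives $\omega(a,t')=\omega(a,ft')$ for every $a$. Hence
\begin{align*}
\omega(s,t)&=\omega(s,te)=\omega(s,t'e)=\omega(s,t')=\omega(s,ft')\\
&=\omega(sf,t')=\omega(s'f,t')=\omega(s',ft')=\omega(s',t').
\end{align*}
This does not even use $x\in\mathrm{dom}(e)$ or $\theta_t(x)\in\mathrm{dom}(f)$. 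With this inserted, the rest of your argument goes through.

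Two minor corrections. First, faithfulness of $\rho$ plays no role in $\chi_x$ being a character; only the homomorphism property does. Second, your aside that a germ $[s,x]$ with $s$ acting as the identity near $x$ equals $[s^*s,x]$ is false for this germ relation. That equality needs $sf\in E(S)$ for some idempotent $f$ with $x\in\mathrm{dom}(f)$. All you need is that the units are the germs $[e,x]$ with $e\in E(S)$, which is correct.
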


\begin{proof}
Associativity of $\omega$ yields the cocycle identity.
Germ invariance ensures independence of representatives.
Continuity follows from local constancy on bisections $U(s)$.
\end{proof}

% ----------------------------------------------------------

\subsection{Compatibility with the Universal Groupoid}

\begin{proposition}[Functorial compatibility]
There exists a canonical continuous functor
\[
\Phi:
G_{\mathrm{germ}}(S,\rho)
\longrightarrow
G_u(S)
\]
compatible with mirror constructions and induced twists.
\end{proposition}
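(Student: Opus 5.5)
The natural candidate for $\Phi$ sends a germ of the representation to the germ of the same semigroup element at the character it determines. For $x\in X$ let
\[
\chi_x(e):=\begin{cases}1,& x\in\mathrm{dom}(e),\\ 0,& \text{otherwise},\end{cases}\qquad e\in E(S).
\]
Then set $\Phi([s,x]):=[s,\chi_x]$. The plan has four parts, in order: check that $\chi_x$ is a point of $\widehat{E(S)}$ lying in $D(s^*s)$, check that $\Phi$ is well defined and a continuous functor, check compatibility with twists, and check compatibility with mirrors.

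\textbf{Well-definedness and functoriality.} Since $\rho$ is a homomorphism, $\mathrm{dom}(ef)=\mathrm{dom}(e)\cap\mathrm{dom}(f)$ for idempotents $e,f$, so $\chi_x$ is multiplicative. Moreover, $x\in\mathrm{dom}(s)=\mathrm{dom}(s^*s)$ gives $\chi_x(s^*s)=1$, so $\chi_x$ is nonzero and lies in $D(s^*s)$.

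Well-definedness is immediate. If $(s,x)\sim(t,x)$ via some $e$ with $x\in\mathrm{dom}(e)$ and $se=te$, then $\chi_x(e)=1$ and $se=te$, so $(s,\chi_x)\sim(t,\chi_x)$ in Definition~\ref{def:universal_groupoid}.

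The key identity for functoriality is $\chi_{\theta_t(x)}=\theta_t(\chi_x)$. It follows from
\[
\mathrm{dom}(t^*et)=\theta_t^{-1}\bigl(\mathrm{dom}(e)\cap\mathrm{ran}(t)\bigr).
\]
With this identity, $\Phi$ intertwines range and source, and it preserves products: $[s,\theta_t(x)][t,x]=[st,x]$ is sent to $[s,\theta_t(\chi_x)][t,\chi_x]=[st,\chi_x]$. Preservation of inverses follows the same way.

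For continuity, on the bisection $U(s)$ the map $\Phi$ corresponds to $x\mapsto\chi_x$, and $\chi_x^{-1}(D(e))=\mathrm{dom}(e)$ is open. Hence $\Phi$ is continuous on each basic bisection, and therefore continuous.

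\textbf{Compatibility with twists.} Both cocycles are defined on basic germs by the same formula $\omega(s,t)$, via Proposition~\ref{prop:omega_to_sigma} and the induced germ cocycle. Since $\Phi$ maps basic composable pairs to basic composable pairs with the same labels $s,t$, we get
\[
\sigma_\omega\circ(\Phi\times\Phi)=\sigma_\omega .
\]

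\textbf{Compatibility with mirrors.} I will verify that the square formed by $\Phi$, $\Phi^\#$ and the isomorphisms of Proposition~\ref{prop:germ_opposite} and Lemma~\ref{lem:universal_op} commutes. Both identifications are the identity on labels, $[s,\cdot]\mapsto[s,\cdot]$, so commutativity reduces to comparing labels.

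The mirror twist $\omega^\#(s,t)=\overline{\omega(t,s)}$ matches $\sigma_\omega^\#(g,h)=\overline{\sigma_\omega(h,g)}$ pointwise, by the twist compatibility above applied to the reversed pair. Combined with Theorem~\ref{thm:universal_Cstar_bridge}, this gives compatibility of $\Phi$ with both the mirror constructions and the induced twists.

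\textbf{Main obstacle.} The mirror step is the delicate one, because $\rho^\#(s)=\rho(s^*)^{-1}$ changes the domains. We have $\mathrm{dom}\,\rho^\#(s)=\mathrm{ran}\,\rho(s^*)=\mathrm{dom}\,\rho(s)$, and $\rho^\#(s)$ agrees with $\rho(s)$ there. Together these show that $\rho^\#=\rho$ pointwise.

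I would establish this first, in the symmetric inverse semigroup $I(X)$, using $\rho(s^*)=\rho(s)^{-1}$. It makes $G_{\mathrm{germ}}(S,\rho^\#)=G_{\mathrm{germ}}(S,\rho)$ as spaces, so the germ-level mirror is carried entirely by the reversed multiplication. This is exactly what the universal side records through $S^\#$, and the square then commutes because all maps act as the identity on labels.
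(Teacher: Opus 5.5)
Your map $\Phi([s,x])=[s,\chi_x]$ is the paper's. Your checks of well-definedness, of functoriality via $\chi_{\theta_t(x)}=\theta_t(\chi_x)$, and of $\sigma_\omega\circ(\Phi\times\Phi)=\sigma_\omega$ are sound and more explicit than the paper's sketch. The genuine gap is the mirror step.

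Your computation $\rho^\#(s)=\rho(s^*)^{-1}=\rho(s)$ is correct, but it undercuts your conclusion. It makes $G_{\mathrm{germ}}(S,\rho^\#)$ literally the groupoid $G_{\mathrm{germ}}(S,\rho)$, with the same product $[s,\theta_t(x)][t,x]=[st,x]$, so nothing on the germ side is reversed. The label-preserving map $[s,x]\mapsto[s,x]$ of Proposition~\ref{prop:germ_opposite} then becomes the identity map $G\to G^{\mathrm{op}}$. That map is a functor only when $G$ is a bundle of abelian groups.

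On the universal side the label-preserving map is not even well defined. The partial action of $S^\#$ satisfies $s^*\cdot_\# s=ss^*$ and $\theta^\#_s=\theta_{s^*}$. So a germ $[s,\chi]$ of $G_u(S^\#)$ needs $\chi\in D(ss^*)$, whereas in $G_u(S)$ it needs $\chi\in D(s^*s)$.

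The identifications that actually work move the base point:
\begin{itemize}
\item $[s,\chi]\mapsto[s,\theta_{s^*}(\chi)]$ for $G_u(S^\#)\cong G_u(S)^{\mathrm{op}}$. Equivalently, this is $s\mapsto s^*$ followed by groupoid inversion, which is what the paper's appeal to $s\mapsto s^*$ points at.
\item Groupoid inversion $[s,x]\mapsto[s^*,\theta_s(x)]$ on the germ side.
\end{itemize}
Relative to these, commutativity is not a comparison of labels; it needs $\chi_{\theta_t(x)}=\theta_t(\chi_x)$ again. You also never define $\Phi^\#$. Your construction applied to $(S,\rho^\#)$ lands in $G_u(S)$, not in $G_u(S^\#)$.

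The clean repair is formal. The same underlying map $\Phi$ is a functor $G_{\mathrm{germ}}(S,\rho)^{\mathrm{op}}\to G_u(S)^{\mathrm{op}}$. The identity $\sigma_\omega\circ(\Phi\times\Phi)=\sigma_\omega$ gives $\sigma_\omega^\#\circ(\Phi\times\Phi)=\sigma_\omega^\#$ on opposite composable pairs, so $\Phi$ is twist-preserving between the two mirrors. Composing with the corrected identification of $(G_u(S)^{\mathrm{op}},\sigma_\omega^\#)$ with $(G_u(S^\#),\sigma_{\omega^\#})$ then gives the comparison with $S^\#$.

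A second gap concerns continuity. You check only preimages of the sets $D(e)$, which proves continuity for the topology generated by the $D(e)$ alone. But those sets are clopen, as the paper asserts, only in Paterson's topology of pointwise convergence. There each complement $\widehat{E(S)}\setminus D(f)$ is also open, and its preimage $\mathrm{dom}(s)\setminus\mathrm{dom}(f)$ need not be open.

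For instance, take $X=\mathbb R$ and $S=\{\mathrm{id}_{\mathbb R},\mathrm{id}_{(0,\infty)}\}$. Then $G_{\mathrm{germ}}(S,\rho)=\mathbb R$, while $G_u(S)=\widehat{E(S)}$ is a two-point discrete space, and $x\mapsto\chi_x$ is a step function. So $\Phi$ is not continuous. You need an extra hypothesis, e.g.\ that each $\mathrm{dom}(e)$ be closed as well as open in $X$. The paper's proof is silent on this point too.
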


\begin{proof}
Each germ $[s,x]$ determines a character
\[
\chi_x(e)=1 \iff x\in\mathrm{dom}(\rho(e)).
\]
The assignment
\[
[s,x]\longmapsto [s,\chi_x]
\]
defines a groupoid homomorphism.
Compatibility with mirror constructions follows from
$s\mapsto s^*$ and the opposite construction.
Twist compatibility follows directly from the definitions
of $\sigma_\omega$ in both settings.
\end{proof}

\begin{remark}
The universal groupoid captures representation-independent
structure, whereas the germ groupoid retains geometric
information coming from the specific action on $X$.
Structural chirality may therefore be studied at both levels,
with the universal construction providing canonical invariants
and the germ model enabling explicit computation.
\end{remark}

$$ $$

\noindent Takao Inou\'{e}

\noindent Faculty of Informatics

\noindent Yamato University

\noindent Katayama-cho 2-5-1, Suita, Osaka, 564-0082, Japan

\noindent inoue.takao@yamato-u.ac.jp
 
\noindent (Personal) takaoapple@gmail.com (I prefer my personal mail)

\end{document}